\documentclass[a4paper,11pt]{article}

\title{Generation of Generalised Wreath Products of Symmetric Groups}
\author{Jiaping Lu \\[10pt]
  Mathematical Institute, University of St Andrews,\\
  North Haugh, St Andrews, Fife, KY16 9SS\\[10pt]
  \begin{tabular}{c}\texttt{jl337@st-andrews.ac.uk}\\
    \texttt{jiapingljp@hotmail.com}
  \end{tabular}}

\usepackage{amsmath,amssymb}
\usepackage[margin=2.9cm]{geometry}
\usepackage[amsmath,thmmarks]{ntheorem}
\usepackage{enumitem}
\usepackage{tikz}
\usepackage{mathtools}
\usepackage{booktabs}

\theorembodyfont{\slshape}
\newtheorem{lem}{Lemma}[section]

\newtheorem{cor}[lem]{Corollary}
\newtheorem{thm}[lem]{Theorem}
\theorembodyfont{\normalfont}

\newtheorem{defn}[lem]{Definition}

\theorembodyfont{\slshape}
\theoremnumbering{Alph}

\makeatletter
\theoremstyle{nonumberplain}
\theorembodyfont{\normalfont}
\theoremheaderfont{\normalfont\scshape}
\theoremsymbol{~\ensuremath\square}
\newtheoremstyle{proofstyle}%
  {\item[\theorem@headerfont\hskip\labelsep ##1\theorem@separator]}%
  {\item[\theorem@headerfont\hskip\labelsep ##3\theorem@separator]}
\theoremstyle{proofstyle}
\newtheorem{prf}{Proof:}
\makeatother

\renewcommand{\leq}{\leqslant}
\renewcommand{\geq}{\geqslant}
\newcommand{\nbd}{\nobreakdash-}

\newcommand{\trivsubgp}{\mathbf{1}}

\newcommand{\order}[1]{\mathopen{|}#1\mathclose{|}}

\newcommand{\set}[2]{\{\,#1\mid#2\,\}}
\newcommand{\Sym}{\operatorname{Sym}}

\renewcommand{\geq}{\geqslant}
\renewcommand{\leq}{\leqslant}

\renewcommand{\wr}{\operatorname{wr}}
\newcommand{\Jrel}{\underset{J}{\sim}}

\setlist[enumerate,1]{label={\normalfont(\roman*)}}

\usepackage{hyperref}
\hypersetup{colorlinks=true, linkcolor={red!50!black},
  citecolor={green!50!black}}

\begin{document}

\maketitle

\begin{abstract}
Let $I$ be a finite partially ordered set and let $(\Sym(\Delta_i),\Delta_i)_i$ be a sequence of symmetric groups indexed by $I$. Construct the generalised wreath product $(F,\Delta)$ on this sequence of permutation groups. We determine the minimum number $d(F)$ of generators required for this generalised wreath product.
\end{abstract}

\paragraph{Keywords:} generating sets, wreath products, finite groups

\section{Introduction}
Given a finite group $G$, one common way to describe it is to use a generating set of $G$. Every element of $G$ can be expressed as a product of generators. There have been numerous results on the minimum number of generators of groups. For example, it is often covered in undergraduate group theory courses that any symmetric group can be generated by two elements. Also, it is also proved that every non-abelian finite simple group can be generated by two elements using Classification of Finite Simple Groups.

Another theme of research on generators of groups is the growth rate of the minimum number of generators of products of groups. Wiegold shows in \cite{Wiegold} that as $n$ increases, $d(G^n)$ grows linearly when $G$ is a soluble group and grows logarithmically when $G$ is a perfect group. Apart from direct products, there are also results on generation of iterated wreath products. Quick \cite{MRQ2} considers iterated wreath products constructed using any non-abelian finite simple groups with any faithful transitive actions. He proves that the probability that the iterated wreath products can be generated by two elements converges to the probability that the first wreath factor can be generated by two elements as the order of the first wreath factor tends to infinity. This then implies that this iterated wreath product can be generated by two elements. Bonderenko \cite{Bond} turns to iterated wreath products $W_n$ of finite transitive groups $G_i$ with uniformly bounded number of generators. He shows that  as $n$ tends to infinity, $W_n$ is finitely generated if and only if the direct product $G_1/G_1'\times G_2/G_2'\times\dots\times G_n/G_n'$ is finitely genearted. 

The crown-based product is another interesting product of groups, and it is useful to investigation of generation of groups. Dalla Volta and Lucchini prove in \cite{LuDallaVolta2} that if a finite group needs more generators than any of its proper quotients, then this finite group can be expressed as a crown-based product. Recently, Lucchini and Thakkar use crown-based products and establish an upper bound for the number of tests to determine a minimum generating set of a finite group.

Bailey et al. \cite{Generalised} introduce generalised wreath products constructed on a sequence of permutation groups indexed by a partially ordered set. This product includes direct products and wreath products as well as composition of these two products. When the partially ordered set is an antichain, the generalised wreath product is a direct product, while when the poset is a chain, the generalised wreath product is an iterated wreath product. In some cases, a crown-based product can be constructed using composition of direct products and wreath products, and so it can be expressed as a generalised wreath product. Bailey et al. also introduce poset block structures, and they prove that the group of automorphisms of a poset block structure is a generalised wreath product constructed on a sequence of symmetric groups. Moreover, Anagnostopoulou-Merkouri et al. \cite{Ana} proves that a transitive group which preserve a poset block structure is embedded in a generalised wreath product, and they determine the factors of this product.

In this paper, we will establish the minimum number of generators of generalised wreath products constructed on a sequence of symmetric groups indexed by arbitrary finite partially ordered sets using their natural actions. When the partially ordered set is a chain, we recover the growth rate of generators of iterated wreath product of symmetric groups determined in \cite{LuQuick}.  

Given a group $G$, denote by $d(G)$ the minimum number of generators of $G$. The theorem we will prove is as follows:
\begin{thm}\label{thm:chap4main1}
Let $(I, \leq)$ be a finite partially ordered set with $\order{I}\geq2$. For each $i\in I$, let $\Delta_i$ be a finite non-singleton set and let $(F, \Delta)$ be the generalised wreath product constructed from the symmetric groups $(\Sym(\Delta_i), \Delta_i)_{i\in I}$. Then
\[
d(F)=\order{I}.
\] 
\end{thm}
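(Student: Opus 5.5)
We prove the two inequalities $d(F)\geq\order{I}$ and $d(F)\leq\order{I}$ separately.

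\textbf{Lower bound.} For the lower bound we look at the abelianisation. Write $F$ as an iterated semidirect product of base groups $B_i$, where $B_i$ is a direct product of copies of $\Sym(\Delta_i)$ indexed by the tuples of points in the coordinates strictly above $i$ (the set $A(i)$ of ancestors). Choose a linear extension of $(I,\leq)$ and build $F$ top-down. Then, for each $i\in I$, the sign map applied to every copy of $\Sym(\Delta_i)$ followed by the product of all those signs gives a homomorphism $\varepsilon_i\colon F\to C_2$. The first step is to check that $\varepsilon_i$ is well defined, i.e.\ invariant under the action of the groups $B_j$ with $j$ above $i$. This holds because such elements only permute the copies in $B_i$, and permuting factors preserves the product of signs. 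Each $\varepsilon_i$ is surjective since $\order{\Delta_i}\geq2$, and together they give $\varepsilon\colon F\to C_2^{\order{I}}$. Restricted to a single transposition in one copy of $\Sym(\Delta_i)$, this map hits exactly the $i$-th basis vector, so $\varepsilon$ is onto. Hence $d(F)\geq d(C_2^{\order{I}})=\order{I}$.

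\textbf{Upper bound.} For the upper bound we construct $\order{I}$ explicit generators and argue by induction on $\order{I}$. Pick a minimal element $m\in I$ and set $I'=I\setminus\{m\}$. Then $F\cong K\rtimes F'$ (or $F=K\rtimes F'$ acting on coordinates), where $F'$ is the generalised wreath product over $I'$ and $K$ is the base group $\Sym(\Delta_m)^{\Delta_{A(m)}}$. Here $\Delta_{A(m)}$ is the product of the $\Delta_j$ over $j>m$, and $F'$ acts on it through its projection to the ancestor coordinates.

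By induction (or trivially, $d(\Sym(\Delta))\leq 2$, when $\order{I'}=1$) we have generators $g_1,\dots,g_{k}$ of $F'$ with $k=\order{I'}$. The plan is to lift them to $\tilde g_j=(\kappa_j,g_j)$ with suitable $\kappa_j\in K$ and to add one further element $h=(\kappa,1)$ with $\kappa\in K$. This gives $\order{I}$ elements in total, and we must choose the $\kappa$'s so that these generate $F$.

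\textbf{The main obstacle} is showing that the lifted elements and $h$ generate all of $K$. The normal closure of $h$ under the $g_j$ contains, by conjugation, the elements $\kappa^{g}$ for every $g\in F'$. Since $F'$ acts transitively on $\Delta_{A(m)}$ (and the case $A(m)=\emptyset$ reduces to a direct product, handled by the same argument with $K=\Sym(\Delta_m)$), we choose $\kappa$ supported on a single point $x_0$ and equal there to, say, an $n$-cycle or a transposition. Moreover, we put one lifted generator's $\kappa_j$ at $x_0$ equal to a complementary element, so that the stabiliser of $x_0$ in $\langle\tilde g_j, h\rangle$ projects onto all of $\Sym(\Delta_m)$.

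This is the Lucchini-type lifting argument from the chain case in \cite{LuQuick}. One must check that the point stabiliser in the generated subgroup projects onto $\Sym(\Delta_m)$ at $x_0$; this is where the choice of $\kappa_j$ matters. One must also check that the full $K$ is obtained, not merely a subdirect product. For the latter we use that subdirect products of $\Sym(\Delta_m)$ over a transitive index set which contain an element supported at one point, with nontrivial projection, contain $\mathrm{Alt}(\Delta_m)^{\Delta_{A(m)}}$. The abelian top $C_2^{\Delta_{A(m)}}$ is then handled as an $\mathbb{F}_2F'$-module: the one-point-supported sign vector generates the permutation module. Small cases such as $\order{\Delta_m}\leq 4$, where $\mathrm{Alt}$ is not simple, need separate attention. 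I expect this case to be the hardest to make fully rigorous.
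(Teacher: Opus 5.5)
Your lower bound is correct and is the paper's argument in direct form. The map $f\mapsto\prod_{\delta\in\Delta_{A(i)}}\mathrm{sgn}(\delta f_i)$ is a homomorphism because in $t_i=f_i(f_{A(i)}h_i)$ the factor $f_{A(i)}$ only permutes $\Delta_{A(i)}$. The paper's induction through the quotient $C_2\times F_J$ is the same computation.

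For the upper bound you take a genuinely different route: a uniform step $d(F)\le d(F')+1$, obtained by removing any single minimal element. The paper instead splits the inductive step into two cases. When there is a unique minimal element, it uses the Lucchini--Menegazzo/Lucchini formula $d(H\wr G)=\max(2,d(C_2\wr G))$, applied only where the action is faithful. When there are two minimal elements, it uses $F=\langle D_m,D_n,\bar F_K\rangle$. Your route is more elementary and indifferent to faithfulness of $F'$ on $\Delta_{A(m)}$. As written, however, it has a gap at the base and an unverified core step.

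\textbf{The base case.} Your step always adds one generator, so it needs $d(F')\le\order{I'}$. For $\order{I'}=1$ this would require $\Sym(\Delta)$ to be cyclic, which fails once $\order{\Delta}\ge 3$. Your parenthetical $d(\Sym(\Delta))\le 2$ therefore only gives $d(F)\le 3$ when $\order{I}=2$. The induction must start at $\order{I}=2$ with the independent facts $d(\Sym(\Delta_i)\times\Sym(\Delta_j))=2$ and $d(\Sym(\Delta_i)\wr\Sym(\Delta_j))=2$; the paper takes the latter from Lu--Quick. These cannot come from your lifting scheme. With that base in place, your step covers every $\order{I}\ge 3$, including the three-element posets the paper treats by hand.

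\textbf{The inductive step.} You leave it as ``one must check'', and the sketched route through $\mathrm{Alt}(\Delta_m)^{\Delta_{A(m)}}$ is both unnecessary and partly wrong.
\begin{itemize}
\item The subdirect-product claim fails for $\order{\Delta_m}=4$: take the preimage of the diagonal of $(S_4/V_4)^{\Delta_{A(m)}}$.
\item The $n$-cycle option for $\kappa$ fails when $\order{\Delta_m}$ is odd. Any $\order{I}$ generators of $F$ must map onto a basis of $C_2^{I}$, which forces $\varepsilon_m(h)\ne 0$, but an odd-length cycle is even.
\end{itemize}

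\textbf{How to close the step.} Choose the generators as follows.
\begin{itemize}
\item Let $h$ be a transposition $t$ at $x_0$ and the identity elsewhere.
\item Let $\tilde g_1$ carry $\tau$ at $x_0$ and the identity elsewhere, with $\langle t,\tau\rangle=\Sym(\Delta_m)$.
\item Take all other $\kappa_j$ trivial, and set $G=\langle\tilde g_1,\dots,\tilde g_k,h\rangle$.
\end{itemize}
Let $r$ be the length of the $\langle g_1\rangle$-orbit of $x_0$. Then $\tilde g_1^{\,r}$ fixes $x_0$, and its $x_0$-component is exactly $\tau$, because the trajectory of $x_0$ meets $x_0$ only once in $r$ steps. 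The element $h$ also fixes $x_0$, with component $t$.

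Now let $T$ be the set of $\sigma\in\Sym(\Delta_m)$ such that the element equal to $\sigma$ at $x_0$ and the identity elsewhere lies in $G$. Then $T$ is a subgroup normalised by the $x_0$-components of elements of $G$ fixing $x_0$. Hence $T$ is normal in $\langle t,\tau\rangle=\Sym(\Delta_m)$, and since it contains $t$, we get $T=\Sym(\Delta_m)$. Because $G$ maps onto $F'$, which is transitive on $\Delta_{A(m)}$, conjugation carries this to every coordinate. So $K\le G$, and hence $G=F$. No faithfulness and no small-case analysis is needed.
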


A summary of the paper follows. In Section~\ref{sec:Generalised}, we will introduce generalised wreath products using definitions and results in \cite{Generalised}. We will further investigate the structure of generalised wreath products in Section~\ref{sec:structure}. Finally, we will give the proof of Theorem~\ref{thm:chap4main1} in Section~\ref{sec:proof}.

\paragraph{Acknowledgements:} The author is funded by the China
Scholarship Council.  

\section{Generalised wreath products}\label{sec:Generalised}

In this section, we will introduce generalised wreath products as defined by Bailey et al. in \cite{Generalised}, and we will mostly inherit their notations.

Let $(I,\leq)$ be a non-empty partially ordered set. If two elements $i, j$ are incomparable in $I$, then we write $i\perp j$. A subset $J$ of $I$ is \textit{ancestral} if whenever $i>j$ and $j\in J$, then $i\in J$. For any $i\in I$, we define 
\[
A(i)=\{j\in I: j>i\}, 
\]
which is an ancestral subset of $I$.

Let $(\Delta_i)_{i\in I}$ be a sequence of non-empty sets indexed by elements of $I$. Set $\Delta=\prod_{i\in I}\Delta_i$, and $\Delta_J=\prod_{i\in J}\Delta_i$ for $J\subseteq I$. In particular, for $i\in I$, 
\[
\Delta_{A(i)}=\prod_{j\in A(i)}\Delta_i=\prod_{j>i}\Delta_i.
\]
For subsets $K\subseteq J\subseteq I$, denote by $\pi^J_K:\Delta_J\to \Delta_K$ the natural projection from $\Delta_J$ onto $\Delta_K$. If $K=\{k\}$, we shall write $\pi^J_k$ for $\pi^J_K$. We will also abbreviate $\pi^I_J$ to $\pi_J$.

We write elements in $\Delta$ as $\delta=(\delta_i)_{i\in I}$, where $\delta_i\in\Delta_i$ for each $i$. For $J\subseteq I$, we define an equivalence relation $\Jrel$ on $\Delta$ by $\gamma\Jrel\delta$ if and only if $\gamma\pi_J=\delta\pi_J$.

Now for each $i\in I$, let $G_i$ be a permutation group on $\Delta_i$ with the identity denoted by $1_i$, and let $F_i$ be the set of all functions from $\Delta_{A(i)}$ into $G_i$. For $J\subseteq I$, put $F_J=\prod_{i\in J}F_i$, and let $F=F_I$. Then $F$ is the \textit{generalised wreath product} constructed from $(G_i,\Delta_i)_{i\in I}$. We write elements of $F$ as $f=(f_i)_{i\in I}$ with $f_i\in F_i$.  If $K\subseteq J\subseteq I$, then denote by $\varphi^J_K: F_J\to F_K$ the natural projection. Analogously, if $K=\{k\}$, we will use $\varphi^J_k$ instead. We will also write $\varphi_J: F\to F_J$ for the natural projection.

In the following, we will show that a generalised wreath product is a group. But first observe that for $\delta\in\Delta$ and $i\in I$, $\delta\pi_{A(i)}\in\Delta_{A(i)}$. The set $F_i$ contains all functions from $\Delta_{A(i)}$ to the permutation group $(G_i, \Delta_i)$. Thus, $\delta\pi_{A(i)}f_i$ is an element of the group $G_i$, and we can apply it to $\delta_i\in\Delta_i$ using the group action of $G_i$ on $\Delta_i$. We shall denote the image by $\delta_i(\delta\pi_{A(i)}f_i)$. This induces maps from elements of $F$ on $\Delta$ as follows.

\begin{defn}\label{def:Fdef}
For $f=(f_i)_{i\in I}\in F$ and $\delta=(\delta_i)_{i\in I}\in \Delta$, define a function $\alpha: \Delta\times F\to \Delta$ as follows:
\[
(\delta, f)\mapsto(\delta_i(\delta\pi_{A(i)}f_i))_{i\in I}.
\]
\end{defn}
We will write $\delta f$ for the sequence $(\delta_i(\delta\pi_{A(i)}f_i))_{i\in I}$ of $\Delta$. The following lemma shows that for an ancestral subset $J\subseteq I$, the functions derived from elements of $F$ preserve the equivalence relation $\Jrel$.

\begin{lem}[Bailey et al.~{\cite[Lemma 2]{Generalised}}]\label{lem:relationpreserve}
Let $J$ be an ancestral subset of $I$. Let $\gamma,\delta\in \Delta$ and $f\in F$. If $\gamma\Jrel\delta$, then $\gamma f\Jrel\delta f$.
\end{lem}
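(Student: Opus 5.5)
We prove this directly from Definition~\ref{def:Fdef}, checking coordinatewise that $\gamma f$ and $\delta f$ agree on every coordinate indexed by $J$. Since $\gamma\Jrel\delta$ means exactly $\gamma\pi_J=\delta\pi_J$, that is, $\gamma_i=\delta_i$ for all $i\in J$, the goal is to show $(\gamma f)_i=(\delta f)_i$ for every $i\in J$.

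The key observation is that ancestrality of $J$ gives $A(i)\subseteq J$ for each $i\in J$. Indeed, if $j\in A(i)$ then $j>i$ with $i\in J$, so $j\in J$ by the definition of an ancestral subset. Consequently the restriction of $\gamma$ to $A(i)$ is determined by its restriction to $J$. Formally, $\pi_{A(i)}=\pi_J\pi^J_{A(i)}$, and so
\[
\gamma\pi_{A(i)}=\gamma\pi_J\pi^J_{A(i)}=\delta\pi_J\pi^J_{A(i)}=\delta\pi_{A(i)}.
\]

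Now fix $i\in J$. Since $f_i$ is a function on $\Delta_{A(i)}$ and its two arguments coincide, we get a single group element
\[
g:=\gamma\pi_{A(i)}f_i=\delta\pi_{A(i)}f_i\in G_i.
\]
Then, by Definition~\ref{def:Fdef} and using $\gamma_i=\delta_i$ (as $i\in J$),
\[
(\gamma f)_i=\gamma_i g=\delta_i g=(\delta f)_i.
\]

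Since this holds for all $i\in J$, we have $(\gamma f)\pi_J=(\delta f)\pi_J$, i.e.\ $\gamma f\Jrel\delta f$. There is no real obstacle here. The only point requiring care is the step $A(i)\subseteq J$, which is exactly where the ancestral hypothesis is used. Without it the lemma fails, because $f_i$ could then depend on coordinates outside $J$ where $\gamma$ and $\delta$ differ.
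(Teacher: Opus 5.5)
Your proof is correct: the ancestral hypothesis gives $A(i)\subseteq J$ for every $i\in J$, which forces $\gamma\pi_{A(i)}=\delta\pi_{A(i)}$ and hence $(\gamma f)_i=(\delta f)_i$. The paper quotes this lemma from Bailey et al.\ without proof, and your direct coordinatewise check from Definition~\ref{def:Fdef} is essentially the standard argument given there.
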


We observe that if $J$ is an ancestral subset of $I$ and $j$ is an element of $J$, then the set $A(j)$ is the same whether we compute it in the whole set $I$ or within the subset $J$. Then, in either case, $\Delta_{A(j)}$ is identical, and so is the set $F_j$ of all functions from $\Delta_{A(j)}$ into $G_j$. Hence $F_J$ is the same object whether we treat $F_J=F\varphi_J$ or consider $F_J$ as the generalised wreath product constructed from $(G_i, \Delta_i)_{i\in J}$. Then elements of $F_J$ induce maps on $\Delta_J$ according to Definition~\ref{def:Fdef}. Let $\delta\in \Delta$. For $f\in F$ and an ancestral subset $J$, the image $f\varphi_J$ is an element of $F_J$ and we observe that
\begin{equation}\label{eq:fphiJ}
(\delta\pi_J)f\varphi_J=(\delta f)\pi_J.
\end{equation}
Also, if $K\subseteq J\subseteq I$ and $K,J$ are ancestral subsets of $I$, then we have a generalised version of Equation~\eqref{eq:fphiJ}:
\begin{equation}\label{eq:generalhat=phi=pi}
(\delta\pi^J_K)f\varphi^J_K=(\delta f)\pi^J_K.
\end{equation}

In the rest of this chapter, for ancestral subsets $K, J$ with $K\subseteq J$ and $f\in F_J$, in addition to $f\varphi^J_K$, we will also use $f_K$ for the image of $f$ in $F_K$.

Bailey et al. defined a binary operation on $F$ in \cite[Lemma 4]{Generalised} as follows, and they showed that $F$ is a group under this operation.

\begin{defn}\label{def:mul}
Let $f=(f_i)_{i\in I},h=(h_i)_{i\in I}$ be elements of $F$. Then we set $t=(t_i)_{i\in I}=fh$ with
\[
t_i=f_i(f_{A(i)}h_i)\in F_i,
\]
where $f_{A(i)}=f\varphi_{A(i)}\in F_{A(i)}$, and the product of the functions $f_i$ and $f_{A(i)}h_i$ from $\Delta_{A(i)}=\prod_{j\in A(i)}\Delta_j$ to $G_i$ is defined pointwise.
\end{defn}
In the formula above, we note that $f_{A(i)}$ induces a function on $\Delta_{A(i)}$. Then for $\omega\in\Delta_{A(i)}$, the pointwise product of $f_i$ and $f_{A(i)}h_i$ means 
\begin{equation}\label{eq:omegati}
\omega t_i=(\omega f_i)(\omega f_{A(i)}h_i).
\end{equation}
Now let $\delta=(\delta_i)_{i\in I}\in\Delta$, $f,h\in F$ and $t=fh$. Using Definition~\ref{def:Fdef} and Equation~\eqref{eq:omegati}, we have the expression of $\delta t$:
\[
\delta t=(\delta_i(\delta\pi_{A(i)}t_i))_{i\in I}=(\delta_i(\delta\pi_{A(i)}f_i)((\delta\pi_{A(i)}f_{A(i)})h_i))_{i\in I}.
\]
In the formula above, we observe that $\delta\pi_{A(i)}\in\Delta_{A(i)}$ and $f_i, h_i$ are functions $\Delta_{A(i)}\to G_i$. Then $\delta\pi_{A(i)}f_i$ and $(\delta\pi_{A(i)}f_{A(i)})h_i$ are elements of the permutation group $G_i$. Therefore, $\delta_i(\delta\pi_{A(i)}f_i)((\delta\pi_{A(i)}f_{A(i)})h_i)$ is the image of $\delta_i$ under the composition of the two permutations $\delta\pi_{A(i)}f_i$ and $(\delta\pi_{A(i)}f_{A(i)})h_i$.

The following lemma shows that there is an identity permutation on $\Delta$ in $F$ and we will denote it by $z$.

\begin{lem}[Bailey et al.~{\cite[Lemma 5]{Generalised}}]\label{lem:identity}
If, for each $i\in I$, the function $z_i\in F_i$ is defined by $\gamma z_i=1_i$ for all $\gamma\in \Delta_{A(i)}$ then $z=(z_i)_{i\in I}$ is the identity permutation on $\Delta$.
\end{lem}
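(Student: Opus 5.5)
The plan is to check directly from Definition~\ref{def:Fdef} that $z$ fixes every point of $\Delta$. Then, since the statement is used to call $z$ the identity of $F$, I will also check from Definition~\ref{def:mul} that $z$ is a two-sided identity for the multiplication on $F$. Neither step needs more than unwinding the definitions.

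\emph{Step 1: $z$ acts trivially on $\Delta$.} Let $\delta=(\delta_i)_{i\in I}\in\Delta$. By Definition~\ref{def:Fdef},
\[
\delta z=(\delta_i(\delta\pi_{A(i)}z_i))_{i\in I}.
\]
For each $i$ we have $\delta\pi_{A(i)}\in\Delta_{A(i)}$, so by hypothesis $\delta\pi_{A(i)}z_i=1_i$. Since $1_i$ fixes $\delta_i$, we get $\delta z=\delta$, so $z$ is the identity permutation on $\Delta$.

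The same argument works inside any ancestral subset $J$. As observed after Lemma~\ref{lem:relationpreserve}, $F_J$ is the generalised wreath product on $(G_i,\Delta_i)_{i\in J}$. Also $z\varphi_J=(z_i)_{i\in J}$ has each component constantly equal to $1_i$. Hence $z_J=z\varphi_J$ induces the identity map on $\Delta_J$. I will use this with $J=A(i)$.

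\emph{Step 2: $z$ is a two-sided identity in $F$.} Let $f\in F$.
\begin{itemize}
\item For $t=fz$, Definition~\ref{def:mul} gives $t_i=f_i(f_{A(i)}z_i)$. The function $f_{A(i)}z_i$ sends $\omega\in\Delta_{A(i)}$ to $(\omega f_{A(i)})z_i=1_i$, because $z_i$ is constant. By the pointwise product \eqref{eq:omegati}, $\omega t_i=(\omega f_i)1_i=\omega f_i$, so $t_i=f_i$.
\item For $t=zf$, we get $t_i=z_i(z_{A(i)}f_i)$. For $\omega\in\Delta_{A(i)}$ we have $\omega z_{A(i)}=\omega$ by Step 1 applied to the ancestral set $A(i)$. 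Hence $\omega t_i=1_i(\omega f_i)=\omega f_i$, so again $t_i=f_i$.
\end{itemize}
Thus $fz=zf=f$.

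The only point needing care is the second computation. It relies on $z_{A(i)}$ acting trivially on $\Delta_{A(i)}$, which is exactly Step 1 applied to the smaller generalised wreath product $F_{A(i)}$. This is legitimate because $A(i)$ is ancestral, so $F_{A(i)}$ and its action are the same whether formed inside $F$ or on their own. Beyond this, the proof is routine unwinding of the definitions.
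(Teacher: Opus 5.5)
Your proof is correct and is the expected direct verification: the paper states this lemma without proof (citing Bailey et al.), and your Step~1, computing $\delta z=(\delta_i 1_i)_{i\in I}=\delta$ from Definition~\ref{def:Fdef}, is all the statement requires. Step~2 goes beyond the statement but is also correct and useful: it checks directly from Definition~\ref{def:mul} that $z$ is the two-sided identity of $F$, using Step~1 on the ancestral set $A(i)$ and without appealing to faithfulness, and this is how the paper later uses $z$.
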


The following theorem shows that if $I$ satisfies the ascending chain condition, then the generalised wreath product $(F,\Delta)$ constructed from permutation groups $(G_i, \Delta_i)_{i\in I}$ is a permutation group. It furthermore shows that, for any ancestral subset $J$ of $I$, $(F_J,\Delta_J)$ is a permutation group and $(\varphi_J, \pi_J)$ is a permutation homomorphism. We will then use $z_J$ for the identity of $F_J$.

\begin{thm}[Bailey et al.~{\cite[Theorem A]{Generalised}}]\label{thm:generalised}
Let $I$ be a partially ordered set with the ascending chain condition. Let $F$ be the generalised wreath product constructed from a sequence of permutation groups $(G_i, \Delta_i)_{i\in I}$. Then 
\begin{enumerate}[label=\textup{(\alph*)}]
\item\label{i:permutation} for all ancestral subsets $J$ of $I$, $(F_J, \Delta_J)$ is a faithful permutation group;
\item\label{i:homomorphism} if $J$ and $K$ are ancestral subsets of $I$ with $K\subseteq J$ then $(\varphi_K^J, \pi_K^J)$ is a permutation homomorphism from $(F_J, \Delta_J)$ onto $(F_K, \Delta_K)$ with kernel
\[
N_K^J=\set{f\in F_J}{f_j=z_j\ \text{for}\ j\in K}.
\]
\end{enumerate}
\end{thm}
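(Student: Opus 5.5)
The approach is to realise $F_J$ as a set of permutations of $\Delta_J$ closed under the multiplication of Definition~\ref{def:mul}, and to inherit the group axioms from $\Sym(\Delta_J)$. Throughout, $J$ is an arbitrary ancestral subset. By the remark preceding Equation~\eqref{eq:fphiJ}, $F_J$ may be treated as the generalised wreath product over the poset $J$, and $J$ again satisfies the ascending chain condition. So it suffices to prove (a) for $J=I$, and the general case follows verbatim.

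\textbf{Step 1: $F$ acts compatibly with its multiplication.} For $f,h\in F$ and $\delta\in\Delta$ we show $\delta(fh)=(\delta f)h$. The $i$\nbd th coordinate of $(\delta f)h$ is $(\delta f)_i\bigl((\delta f)\pi_{A(i)}h_i\bigr)$. Since $A(i)$ is ancestral, Equation~\eqref{eq:fphiJ} gives $(\delta f)\pi_{A(i)}=(\delta\pi_{A(i)})f_{A(i)}$. Substituting, this coordinate equals $\delta_i(\delta\pi_{A(i)}f_i)\bigl((\delta\pi_{A(i)}f_{A(i)})h_i\bigr)$, which is exactly the $i$\nbd th coordinate of $\delta t$ with $t=fh$ as computed after Definition~\ref{def:mul}. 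By Lemma~\ref{lem:identity}, $z$ acts as the identity.

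\textbf{Step 2: faithfulness.} Suppose $\delta f=\delta h$ for all $\delta\in\Delta$. Fix $i\in I$ and $\omega\in\Delta_{A(i)}$, and let $\delta$ range over all points with $\delta\pi_{A(i)}=\omega$. Then $\delta_i$ ranges over all of $\Delta_i$, so the permutations $\omega f_i$ and $\omega h_i$ of $\Delta_i$ agree everywhere. Since $G_i$ acts faithfully, $\omega f_i=\omega h_i$, and hence $f=h$. Combined with Step 1, the map $F\to\Delta^\Delta$ sending $f$ to its action is an injective monoid homomorphism. Associativity in $F$ is therefore inherited from composition of maps.

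\textbf{Step 3: each $f$ acts bijectively, and inverses exist in $F$.} This is the main obstacle, and the only place the ascending chain condition is used. Since $I$ satisfies ACC, every nonempty subset of $I$ has a maximal element, so we can argue by well-founded induction from the top of $I$. The claim to prove is: for each $i$, the map induced by $f_{A(i)}$ on $\Delta_{A(i)}$ is a bijection.
\begin{itemize}
\item[--] \emph{Injectivity.} If $\delta f=\gamma f$ but $\delta\neq\gamma$, choose $i$ maximal among the coordinates where $\delta$ and $\gamma$ differ. Then $\delta\pi_{A(i)}=\gamma\pi_{A(i)}=:\omega$, and the permutation $\omega f_i$ maps $\delta_i\neq\gamma_i$ to the same point, a contradiction.
\item[--] \emph{Inverse.} Define $g\in F$ by
\[
\omega g_i=\bigl((\omega')f_i\bigr)^{-1},
\]
where $\omega'\in\Delta_{A(i)}$ is the unique preimage of $\omega$ under $f_{A(i)}$. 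Existence of $\omega'$ (surjectivity) is obtained by building preimages coordinatewise downward, using the inductive hypothesis on $A(i)$.
\end{itemize}
A direct check with Definition~\ref{def:mul} then gives $fg=z$, and $gf=z$ follows via Step 2. If the recursion proves awkward, I would first verify everything for finite $I$ by induction on $|I|$, removing a minimal element $m$ and using $A(m)\subseteq I\setminus\{m\}$. I would then note that the same argument goes through under ACC.

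\textbf{Part (b).} Equation~\eqref{eq:generalhat=phi=pi} says precisely that $\pi^J_K$ intertwines the actions of $f$ and $f\varphi^J_K$. Applying it to a product, and using faithfulness of $(F_K,\Delta_K)$ from (a), shows that $\varphi^J_K$ is a homomorphism. It is onto, since any $k\in F_K$ extends to an element of $F_J$ by setting the coordinates in $J\setminus K$ equal to $z_j$. Finally, $f$ lies in the kernel iff $f\varphi^J_K=z_K$, i.e.\ iff $f_j=z_j$ for all $j\in K$, which is $N^J_K$.
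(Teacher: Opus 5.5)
The paper gives no proof of this statement. It is imported, together with Definition~\ref{def:mul} and Lemmas~\ref{lem:relationpreserve} and~\ref{lem:identity}, from Theorem~A of \cite{Generalised}. So there is nothing in the text to compare your argument with, and it has to stand on its own. It does.

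\begin{itemize}
\item Step~1 is exactly the computation $\delta(fh)=(\delta f)h$ that justifies Definition~\ref{def:mul}.
\item Step~2 correctly transports associativity and the identity laws along the injective multiplicative map $f\mapsto(\delta\mapsto\delta f)$.
\item Choosing a maximal coordinate of disagreement is precisely where the ascending chain condition is indispensable. For example, take $I=\{0<1<2<\cdots\}$, $\Delta_i=\{0,1\}$, and let $f_i$ be the swap when the tail $(\delta_j)_{j>i}$ is constantly $1$ and the identity otherwise. Then the two constant sequences have the same image.
\item Part~(b) is also fine. The homomorphism property can even be read off Definition~\ref{def:mul} directly, since $t_k=f_k(f_{A(k)}h_k)$ involves only coordinates in $\{k\}\cup A(k)\subseteq K$. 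So faithfulness is not needed there.
\end{itemize}

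The only step left as a sketch is surjectivity in Step~3. As phrased (an inductive hypothesis on $A(i)$, then assembling preimages), it would also need a check that the preimages found in the various $\Delta_{A(j)}$, $j\in A(i)$, are compatible. Either of two simpler routes closes this.

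\begin{enumerate}
\item Given $\eta\in\Delta$, define $\delta$ by recursion along $>$, which is well founded by ACC: $\delta_i=\eta_i\bigl(\delta\pi_{A(i)}f_i\bigr)^{-1}$. Then $(\delta f)_i=\eta_i$ for every $i$, and no compatibility question arises.
\item You can bypass surjectivity altogether. Injectivity of $f_{A(i)}$ on $\Delta_{A(i)}$ (your argument applied to the poset $A(i)$) already makes your formula for $g_i$ well defined on the image of $f_{A(i)}$; put $\omega g_i=1_i$ elsewhere. Your verification still gives $fg=z$. So every element of the monoid $F$ from Steps~1--2 has a right inverse, which forces $F$ to be a group. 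Each $\delta\mapsto\delta f$ is then a bijection.
\end{enumerate}

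Your fallback of inducting on $\order{I}$ would only cover finite $I$, not the ACC statement. With either fix above it is not needed.
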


Finally, we end this section with the following lemma, which establishes the condition for $(F,\Delta)$ to be transitive.
\begin{lem}[Bailey et al.~{\cite[Lemma 9]{Generalised}}]\label{lem:transitive}
The generalised wreath product of the permutation groups $(G_i,\Delta_i)_{i\in I}$ is transitive if and only if $(G_i,\Delta_i)$ is transitive for all $i\in I$.
\end{lem}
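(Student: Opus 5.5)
The plan is to prove both directions directly from Definition~\ref{def:Fdef}. The point to exploit is that the $i$-th coordinate of $\delta f$ depends only on $\delta_i$ and on $\delta\pi_{A(i)}$, and it is obtained by applying a single element of $G_i$ to $\delta_i$. Both directions are elementary, and I do not expect a real obstacle.

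\emph{Necessity.} Suppose $(F,\Delta)$ is transitive and fix $i\in I$. Let $\alpha,\beta\in\Delta_i$. Since every $\Delta_j$ is non-empty, we can choose $\delta\in\Delta$ with $\delta_i=\alpha$ and $\gamma\in\Delta$ with $\gamma_i=\beta$, filling the other coordinates arbitrarily. By transitivity there is $f\in F$ with $\delta f=\gamma$. Comparing $i$-th coordinates via Definition~\ref{def:Fdef} gives
\[
\beta=\gamma_i=\delta_i(\delta\pi_{A(i)}f_i)=\alpha\, g, \qquad\text{where } g=\delta\pi_{A(i)}f_i\in G_i .
\]
Hence $G_i$ maps $\alpha$ to $\beta$, so $(G_i,\Delta_i)$ is transitive.

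\emph{Sufficiency.} Suppose each $(G_i,\Delta_i)$ is transitive, and let $\delta,\gamma\in\Delta$. For each $i\in I$ choose $g_i\in G_i$ with $\delta_i g_i=\gamma_i$. Let $f_i\in F_i$ be the constant function $\Delta_{A(i)}\to G_i$ with value $g_i$; it lies in $F_i$ because $F_i$ consists of \emph{all} functions $\Delta_{A(i)}\to G_i$. Put $f=(f_i)_{i\in I}\in F$. By Definition~\ref{def:Fdef},
\[
\delta f=(\delta_i(\delta\pi_{A(i)}f_i))_{i\in I}=(\delta_i g_i)_{i\in I}=\gamma .
\]
So $F$ acts transitively on $\Delta$.

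The only subtle point is that we need $(F,\Delta)$ to be a genuine permutation group so that "transitive" is meaningful. This is guaranteed by Theorem~\ref{thm:generalised} under the ascending chain condition, which holds for the finite posets considered here. Beyond that, the argument uses neither the group structure of Definition~\ref{def:mul} nor any ordering interaction between coordinates.
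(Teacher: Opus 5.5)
Your proof is correct. Necessity follows by reading off the $i$-th coordinate $\delta_i(\delta\pi_{A(i)}f_i)$ of $\delta f$. Sufficiency follows by taking each $f_i$ to be the constant function with value some $g_i$ satisfying $\delta_i g_i=\gamma_i$; this also covers $A(i)=\emptyset$, where $\Delta_{A(i)}$ is a singleton.

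The paper gives no proof of its own here and simply cites \cite[Lemma 9]{Generalised}, so your argument supplies the elementary verification that the citation stands in for. Your argument uses only the map of Definition~\ref{def:Fdef}, not the group structure. So the ascending chain condition is needed only to call $(F,\Delta)$ a permutation group, not for the transitivity argument itself.
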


\section{The structure of generalised wreath products}\label{sec:structure}
Throughout, let $I$ be a finite partially ordered set. Let $(G_i,\Delta_i)_i$ be a sequence of finite permutation groups. Let $(F,\Delta)$ be the generalised wreath product with respect to this sequence. In this section, we will investigate the structure of $F$. We will first describe some subgroups and normal subgroups of $F$. For a minimal elemment $i$ of $I$, we will then show that $F$ decomposes as a split extension of a direct power $G_i^{\Delta_{A(i)}}$ by the generalised wreath product with respect to the poset $I\backslash\{i\}$. We will further show that $F$ can be expressed as an iterated wreath product constrcuted from the permutation groups $G_i$ when $I$ is a chain. When $I$ is an antichain, $F$ can be expressed as the direct product $\prod_{i\in I} G_i$. Finally, we will establish generating sets for $F$. 

We begin with key observations below.

\begin{lem}\label{lem:zitimeszi}
Let $f=(f_i)_{i\in I}, h=(h_i)_{i\in I}\in F$. Set $t=(t_i)_{i\in I}=fh$. Let $j\in I$.
\begin{enumerate} 
\item\label{i:fj=hj=zj}
Suppose that $f_j=h_j=z_j$. Then
\[
t_j=z_j.
\]
\item\label{i:fjinverse}
If $t_j=z_j$, then $f_j=z_j$ if and only if $h_j=z_j$.
\end{enumerate}
\end{lem}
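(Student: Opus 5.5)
The plan is to unpack Definition~\ref{def:mul} pointwise at the coordinate $j$. By that definition $t_j=f_j(f_{A(j)}h_j)$. Evaluating at any $\omega\in\Delta_{A(j)}$ and using Equation~\eqref{eq:omegati} gives
\[
\omega t_j=(\omega f_j)\bigl((\omega f_{A(j)})h_j\bigr),
\]
a product of two elements of $G_j$. Here $\omega f_{A(j)}$ is the image of $\omega$ under the permutation of $\Delta_{A(j)}$ induced by $f_{A(j)}\in F_{A(j)}$. This makes sense because $A(j)$ is ancestral, so $F_{A(j)}$ is itself a generalised wreath product acting on $\Delta_{A(j)}$, and by Theorem~\ref{thm:generalised}\ref{i:permutation} this action is by permutations. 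In particular $\omega\mapsto\omega f_{A(j)}$ is a bijection of $\Delta_{A(j)}$. Both parts of the lemma follow from this single identity.

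For part~\ref{i:fj=hj=zj}, I will use that $z_j$ is the constant function with value $1_j$ (Lemma~\ref{lem:identity}). If $f_j=h_j=z_j$, then $\omega f_j=1_j$ for every $\omega$. Also $(\omega f_{A(j)})h_j=1_j$, because $h_j$ is constantly $1_j$ whatever point it is evaluated at. Hence $\omega t_j=1_j$ for all $\omega$, that is, $t_j=z_j$.

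For part~\ref{i:fjinverse}, assume $t_j=z_j$, so $(\omega f_j)\bigl((\omega f_{A(j)})h_j\bigr)=1_j$ for all $\omega\in\Delta_{A(j)}$.
\begin{enumerate}
\item If $f_j=z_j$, the first factor is $1_j$, so $(\omega f_{A(j)})h_j=1_j$ for all $\omega$. Since $\omega\mapsto\omega f_{A(j)}$ is surjective onto $\Delta_{A(j)}$, $h_j$ takes the value $1_j$ at every point, so $h_j=z_j$.
\item Conversely, if $h_j=z_j$, the second factor is $1_j$ for every $\omega$, so $\omega f_j=1_j$ for all $\omega$ and hence $f_j=z_j$.
\end{enumerate}

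The only step needing care is the surjectivity of the map induced by $f_{A(j)}$ in the first direction of part~\ref{i:fjinverse}. It is supplied by Theorem~\ref{thm:generalised}\ref{i:permutation} applied to the ancestral subset $A(j)$, which applies since $I$ is finite and so has the ascending chain condition. One degenerate case should be noted: if $j$ is maximal, then $A(j)=\emptyset$ and $\Delta_{A(j)}$ is a one-point set. The argument is unchanged, since $f_{A(j)}$ then acts as the identity on that single point.
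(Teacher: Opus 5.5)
Your proof is correct. Part (i) and the direction $h_j=z_j\Rightarrow f_j=z_j$ of part (ii) are the same as in the paper.

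The difference is in the direction $f_j=z_j\Rightarrow h_j=z_j$.

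\emph{Your route.} You argue pointwise. Since $\omega\mapsto\omega f_{A(j)}$ is a bijection of $\Delta_{A(j)}$ (Theorem~\ref{thm:generalised}\ref{i:permutation} applied to the ancestral set $A(j)$), the condition $(\omega f_{A(j)})h_j=1_j$ for all $\omega$ forces $h_j$ to be constantly $1_j$.

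\emph{The paper's route.} It argues algebraically. Writing $f^{-1}=(d_i)_{i\in I}$, it applies the already-proved direction to the product $f^{-1}f=z$ to get $d_j=z_j$. It then applies part (i) to $h=f^{-1}t$.

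\emph{Comparison.} The paper's route needs only that $F$ is a group with identity $z$, together with the two easy facts, so it never examines how $f_{A(j)}$ acts on $\Delta_{A(j)}$. Yours is more direct at the level of coordinates, at the cost of explicitly invoking the permutation property of $(F_{A(j)},\Delta_{A(j)})$. Both ultimately rest on the results of Bailey et al.\ that make $F$ a group acting by permutations, so neither is more general.
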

\begin{prf}
\ref{i:fj=hj=zj}~Definition~\ref{def:mul} gives that $t_j=f_j(f_{A(j)}h_j)$. 
Let $\delta\in \Delta_{A(j)}$. Then $\delta t_j=(\delta f_j)((\delta f_{A(j)})h_j)$. As $f_j=h_j=z_j$, we have $\delta f_j=(\delta f_{A(j)})h_j=1_j$. The proof is completed.

\ref{i:fjinverse}~ Suppose that $h_j=z_j$. Let $\delta\in \Delta_{A(j)}$. Since $t_j=z_j$, we have $(\delta f_j)((\delta f_{A(j)})h_j)=\delta t_j=1_j$.
The fact that $(\delta f_{A(j)})h_j=1_j$ yields $\delta f_j=1_j$. As $\delta$ is arbitrary, $f_j=z_j$. 

Now assume that $f_j=z_j$. Write $f^{-1}=(d_i)_{i\in I}$ where $d_i\in F_i$ for each $i$. As $f^{-1}f=z$ is the identity of $F$, $d_j=z_j$ according to the result in the previous paragraph. Applying part~\ref{i:fj=hj=zj} to the formula $h=f^{-1}t$ yields that $h_j=z_j$. Then the proof is completed.
\end{prf}

We will then determine some subgroups and normal subgroups of $F$ in the following. For $j\in I$, let 
\[
L_j=\set{f=(f_{i})_{i\in I}\in F}{f_j=z_j}.
\]
It is a direct result of Lemma~\ref{lem:zitimeszi} that $L_j$ is a subgroup of $F$ for any $j\in I$.

Let $J$ be an ancestral subset of $I$. Set
\begin{equation}\label{eq:barFK}
\bar F_J=\bigcap_{k\in I\backslash J}L_k=\set{f=(f_i)_{i\in I}\in F}{f_i=z_i \ \text{for}\ i\notin J }.
\end{equation}
Also, for $i$ in $J$, set
\begin{equation}\label{eq:Hi}
H_i^J=\set{f=(f_j)_{j\in J}\in F_J}{f_j=z_j\ \text{for}\ j\neq i}.
\end{equation}
If $J=I$, then we write $H_i$ instead. In particular, 
\[
H_i=\set{f=(f_j)_{j\in I}\in F}{f_j=z_j\ \text{for}\ j\neq i}=\bigcap_{k\in I\backslash\{i\}}L_k.
\]
Recall from Section~\ref{sec:Generalised} that $\varphi_J$ is the projection map from $F$ to $F_J$. We give the following lemma without proof.
\begin{lem}\label{lem:barFKiso}
Let $J$ be an ancestral subset of $I$, and let $i\in J$. Then $\bar F_J\varphi_J=F_J$ and $H_i\varphi_J=H^J_i$. Moreover, $\varphi_J$ restricted on $\bar F_J$ induces an isomorphism from $\bar F_J$ to $F_J$.
\end{lem}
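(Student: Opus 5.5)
We must prove Lemma~\ref{lem:barFKiso}: for an ancestral subset $J$ and $i\in J$, we have $\bar F_J\varphi_J=F_J$ and $H_i\varphi_J=H_i^J$, and $\varphi_J|_{\bar F_J}$ is an isomorphism onto $F_J$. The approach is to work directly with coordinates, using that $\varphi_J$ is a homomorphism by Theorem~\ref{thm:generalised}\ref{i:homomorphism} and that it simply forgets the coordinates outside $J$.

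First we check that $\bar F_J$ is a subgroup. It is an intersection of the subgroups $L_k$ for $k\in I\backslash J$, and each $L_k$ is a subgroup by Lemma~\ref{lem:zitimeszi}. The same argument shows that $H_i$ is a subgroup.

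Next we prove surjectivity. Take $g=(g_j)_{j\in J}\in F_J$ and define $f\in F$ by $f_j=g_j$ for $j\in J$ and $f_k=z_k$ for $k\notin J$. This is legitimate because $F$ is the full product $\prod_{i\in I}F_i$, and for $j\in J$ the domain $\Delta_{A(j)}$ is the same in $I$ as in $J$, since $J$ is ancestral. Then $f\in\bar F_J$ and $f\varphi_J=g$, so $\bar F_J\varphi_J=F_J$. If moreover $g\in H_i^J$, then $g_j=z_j$ for all $j\in J$ with $j\neq i$, so the lift $f$ has $f_j=z_j$ for every $j\neq i$ in $I$. Thus $f\in H_i$, giving $H_i^J\subseteq H_i\varphi_J$. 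The reverse inclusion $H_i\varphi_J\subseteq H_i^J$ is immediate from the definitions, since projecting an element of $H_i$ keeps the coordinates $z_j$ for $j\in J\backslash\{i\}$.

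Finally we prove injectivity. The kernel of $\varphi_J$ is $N_J^I$, consisting of those $f$ with $f_j=z_j$ for all $j\in J$. An element of $\bar F_J\cap N_J^I$ has every coordinate equal to $z_j$, so it is the identity $z$ by Lemma~\ref{lem:identity}. Hence the restriction of $\varphi_J$ to $\bar F_J$ is an injective homomorphism, and combined with surjectivity it is an isomorphism onto $F_J$.

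The only point needing care is the claim that the coordinatewise lift lies in $F$ and that $\varphi_J$ is a homomorphism. Both follow from $F_i$ being the set of all functions $\Delta_{A(i)}\to G_i$ and from $J$ being ancestral, via Theorem~\ref{thm:generalised}.
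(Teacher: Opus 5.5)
The paper states this lemma without proof, so there is no argument of its own to compare against. Your coordinatewise proof is correct and complete, and it is evidently the intended one.

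Padding $g\in F_J$ with $z_k$ for $k\notin J$ gives both $\bar F_J\varphi_J=F_J$ and $H_i^J\subseteq H_i\varphi_J$ (the latter using $i\in J$). The observation $\bar F_J\cap N_J^I=\{z\}$ then gives injectivity. Theorem~\ref{thm:generalised}\ref{i:homomorphism} supplies the homomorphism property, and that is exactly where ancestrality of $J$ is needed.

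Injectivity could be seen even more directly, since an element of $\bar F_J$ is determined by its $J$-coordinates.
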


The subgroups $H_i$ play an important part in analysing the structure of $F$. Therefore, in the following, we will first focus on properties of $H_i$, and then we turn to the structure of $F$.

For a minimal element $i\in I$, set $J=I\backslash\{i\}$. Then $J$ is ancestral. By Theorem~\ref{thm:generalised}~\ref{i:homomorphism}, $H_i=N^I_J$ is normal in $F$. We have the following lemma.
\begin{lem}\label{lem:Hinormal} Let $i, j$ be two distinct minimal elements of $I$. Then
\begin{enumerate}
\item\label{i:Hinormal}
the subgroup $H_i$ is normal in $F$;
\item\label{i:HicrossHi}
$\langle H_i, H_j\rangle= H_i\times H_j$.
\end{enumerate}
\end{lem}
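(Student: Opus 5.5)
Part~\ref{i:Hinormal} is essentially the observation made just before the statement. Since $i$ is minimal, no element of $I$ lies strictly below $i$, so $J=I\backslash\{i\}$ is ancestral. By Theorem~\ref{thm:generalised}~\ref{i:homomorphism}, $(\varphi_J,\pi_J)$ is a homomorphism from $F$ onto $F_J$ with kernel
\[
N^I_J=\set{f\in F}{f_k=z_k\ \text{for}\ k\in J}.
\]
This set is exactly $H_i$ as defined in \eqref{eq:Hi}. A kernel is normal, so $H_i\trianglelefteq F$.

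For part~\ref{i:HicrossHi}, we apply part~\ref{i:Hinormal} to both $i$ and $j$, so $H_i$ and $H_j$ are normal in $F$. The standard criterion then gives $\langle H_i,H_j\rangle=H_iH_j\cong H_i\times H_j$ as an internal direct product, provided $H_i\cap H_j$ is trivial.

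So it remains to show $H_i\cap H_j=\{z\}$. An element $f\in H_i$ has $f_k=z_k$ for every $k\neq i$. An element of $H_j$ also has $f_i=z_i$, because $i\neq j$. Hence every coordinate of $f$ equals the corresponding $z_k$, and $f=z$, which is the identity by Lemma~\ref{lem:identity}.

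Alternatively, one can get the conclusion from the commutator $[H_i,H_j]\leq H_i\cap H_j=\{z\}$, so that elements of $H_i$ and $H_j$ commute, together with the unique decomposition $h_ih_j$. Either way, once normality is in hand the argument is routine.

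The only points that need care are both in part~\ref{i:Hinormal}. First, we must check that $I\backslash\{i\}$ is ancestral, which is where minimality of $i$ is used. Second, we must check that the kernel description in Theorem~\ref{thm:generalised} matches the definition of $H_i$ coordinatewise. Neither should be a serious obstacle.
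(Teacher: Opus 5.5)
Your proof is correct and takes essentially the paper's approach. For part (i), the paper's only justification is the remark just before the lemma: minimality makes $I\backslash\{i\}$ ancestral, so $H_i=N^I_{I\backslash\{i\}}$ is a kernel by Theorem~\ref{thm:generalised}. Part (ii) is left unproved in the paper, and your argument (both $H_i$ and $H_j$ normal, with $H_i\cap H_j=\{z\}$) is the natural way to fill it in.
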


If $i$ is a minimal element in $I$ and $J=I\backslash\{i\}$. Then $H_i\cap\bar F_J$ is trivial and $\order{H_i\bar F_J}=\order{F}$. The fact that $H_i$ is a normal subgroup of $F$ by Lemma~\ref{lem:Hinormal} gives the following corollary immediately.

\begin{cor}\label{cor:FJsemi}
Let $i$ be a minimal element in $I$. Set $J=I\backslash\{i\}$. Then $F=H_i\rtimes \bar F_J$.
\end{cor}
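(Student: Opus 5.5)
The corollary says that $F$ is the internal semidirect product of the normal subgroup $H_i$ and the subgroup $\bar F_J$. So I will verify three things in turn: $H_i\trianglelefteq F$, $H_i\cap\bar F_J$ is trivial, and $H_i\bar F_J=F$.

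Normality of $H_i$ is Lemma~\ref{lem:Hinormal}~\ref{i:Hinormal}. Alternatively, since $i$ is minimal, $J=I\backslash\{i\}$ is ancestral and $H_i=N^I_J$ is the kernel of $\varphi_J$ by Theorem~\ref{thm:generalised}~\ref{i:homomorphism}. That $\bar F_J$ is a subgroup follows from \eqref{eq:barFK}, since it is an intersection of the subgroups $L_k$.

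For the trivial intersection, take $f\in H_i\cap\bar F_J$. Membership in $H_i$ gives $f_j=z_j$ for all $j\neq i$. Membership in $\bar F_J$ gives $f_i=z_i$, because $i\notin J$. Hence $f=z$, the identity of $F$ by Lemma~\ref{lem:identity}.

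For the product, there are two routes.

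The first is a counting argument. Each $F_k$ consists of all functions $\Delta_{A(k)}\to G_k$, so $\order{F}=\prod_k\order{F_k}$ and $\order{H_i}=\order{F_i}$. By Lemma~\ref{lem:barFKiso}, $\bar F_J\cong F_J$, so $\order{\bar F_J}=\prod_{k\neq i}\order{F_k}$. Since the intersection is trivial, $\order{H_i\bar F_J}=\order{H_i}\order{\bar F_J}=\order{F}$, and therefore $H_i\bar F_J=F$.

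The second is a direct factorisation. Given $f\in F$, use Lemma~\ref{lem:barFKiso} to choose $g\in\bar F_J$ with $g\varphi_J=f\varphi_J$. Then $fg^{-1}$ lies in $\ker\varphi_J=H_i$, so $f\in H_i\bar F_J$.

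Together these show that $F$ is the internal semidirect product $H_i\rtimes\bar F_J$. The only step that needs any care is the product decomposition: the counting uses $\order{F}=\prod_k\order{F_k}$, which holds because $F$ is, as a set, the product of the $F_k$. The kernel argument avoids counting and uses only the surjectivity in Lemma~\ref{lem:barFKiso} together with Theorem~\ref{thm:generalised}~\ref{i:homomorphism}.
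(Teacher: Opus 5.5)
Your proposal is correct and follows essentially the paper's argument: the paper also derives the corollary from $H_i\cap\bar F_J$ being trivial, the order count $\order{H_i\bar F_J}=\order{F}$, and the normality of $H_i$, which it obtains from $H_i=N^I_J$ via Theorem~\ref{thm:generalised}~\ref{i:homomorphism}, as in your alternative. Your second, kernel-based factorisation is a valid variant that avoids the counting step.
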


For an ancestral subset $J$ and $i\in J$, we define a map $\theta_i^J:H^J_i\to G_i^{\Delta_{A(i)}}$ by:
\begin{equation}\label{eq:thetai}
f\theta_i^J=(\delta f_i)_{\delta\in \Delta_{A(i)}},
\end{equation}
where $G_i^{\Delta_{A(i)}}$ means $\order{\Delta_{A(i)}}$ copies of $G_i$ indexed by elements of $\Delta_{A(i)}$ and $f=(f_j)_{j\in J}\in H_i^J$. Note that $f_i$ is a function from $\Delta_{A(i)}$ to $G_i$. If $J=I$, we will write $\theta_i$ instead of $\theta^J_i$. The following lemma shows that $\theta_i$ is an isomorphism.
\begin{lem}\label{lem:Hidirectproduct}
For $i\in I$, the map $\theta_i$ as defined in Equation~\eqref{eq:thetai} is an isomorphism.
\end{lem}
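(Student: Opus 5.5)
We need to show that $\theta_i$ is a homomorphism and a bijection from $H_i$ onto $G_i^{\Delta_{A(i)}}$. We will handle bijectivity first and then the multiplicative property.

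Bijectivity comes directly from the definitions. An element $f\in H_i$ is determined by its $i$-th coordinate $f_i$, since $f_j=z_j$ for every $j\neq i$. Moreover, $f_i$ can be any function $\Delta_{A(i)}\to G_i$, because $F_i$ is the set of all such functions and any choice of $f_i$ with the other coordinates equal to $z_j$ gives an element of $F$. Such a function is exactly a tuple $(\delta f_i)_{\delta\in\Delta_{A(i)}}\in G_i^{\Delta_{A(i)}}$. Hence $\theta_i$ is injective and surjective.

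For the homomorphism property, let $f,h\in H_i$ and $t=fh$. By Lemma~\ref{lem:zitimeszi}\ref{i:fj=hj=zj}, $t_j=z_j$ for all $j\ne i$, so $t\in H_i$; this also shows $H_i$ is closed, although we already know it is a subgroup. By Definition~\ref{def:mul}, for $\delta\in\Delta_{A(i)}$ we have
\[
\delta t_i=(\delta f_i)\bigl((\delta f_{A(i)})h_i\bigr).
\]
The key point is that $f_{A(i)}=f\varphi_{A(i)}$ acts trivially on $\Delta_{A(i)}$. Indeed, since $i\notin A(i)$, every coordinate $f_j$ with $j\in A(i)$ equals $z_j$, so $f_{A(i)}=z_{A(i)}$. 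This is the identity of $F_{A(i)}$ by Lemma~\ref{lem:identity}, applied to the generalised wreath product on the ancestral set $A(i)$. Therefore $\delta f_{A(i)}=\delta$, and
\[
\delta t_i=(\delta f_i)(\delta h_i),
\]
which is coordinatewise multiplication in $G_i^{\Delta_{A(i)}}$. Thus $(fh)\theta_i=(f\theta_i)(h\theta_i)$.

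The only step needing care is identifying $f_{A(i)}$ with the identity of $F_{A(i)}$. This requires that $A(i)$ is ancestral, so that $F_{A(i)}$ is itself a generalised wreath product acting faithfully on $\Delta_{A(i)}$ (Theorem~\ref{thm:generalised}\ref{i:permutation}). Once that is established, the rest is definitional unwinding. The same argument gives the analogous statement for $\theta_i^J$ with $J$ ancestral.
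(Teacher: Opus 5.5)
Your proof is correct and follows essentially the same route as the paper: the homomorphism property comes from $f_{A(i)}=z_{A(i)}$ acting trivially on $\Delta_{A(i)}$, and bijectivity from the fact that an element of $H_i$ is determined by an arbitrary function $f_i\colon\Delta_{A(i)}\to G_i$ (the paper proves injectivity via the kernel, which amounts to the same thing). One small remark: you do not need the faithfulness in Theorem~\ref{thm:generalised}\ref{i:permutation}, since Lemma~\ref{lem:identity} alone shows that $z_{A(i)}$ fixes every point of $\Delta_{A(i)}$.
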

\begin{prf}
Let $f=(f_j)_{j\in I}, h=(h_j)_{j\in I}\in H_i$. Then
\[
(f\theta_i )(h\theta_i)=\big((\delta f_i)_{\delta\in \Delta_{A(i)}}\big)\big((\delta h_i)_{\delta\in \Delta_{A(i)}}\big)=\big((\delta f_i)(\delta h_i)\big)_{\delta\in \Delta_{A(i)}},
\]
where $(\delta f_i)(\delta h_i)$ is a product in $G_i$. On the other hand, set $t=(t_j)_{j\in I}=fh$. Then
\[
t_i=f_i(f_{A(i)}h_i)
\]
according to Definition~\ref{def:mul}. For $\delta\in \Delta_{A(i)}$, $\delta t_i= (\delta f_i)( (\delta f_{A(i)})h_i)$. Observe that $f_{A(i)}=f\varphi_{A(i)}=z_{A(i)}$ is the identity of the permutation group $(F_{A(i)}, \Delta_{A(i)})$. Therefore, $(\delta f_{A(i)})h_i =\delta h_i$ and
\[
\delta t_i= (\delta f_i)( \delta(f_{A(i)}h_i))=(\delta f_i)(\delta h_i)
\]
for all $\delta\in \Delta_{A(i)}$. Hence
\[
(fh)\theta_i=(\delta t_i)_{\delta\in\Delta_{A(i)}}=((\delta f_i)(\delta h_i))_{\delta\in\Delta_{A(i)}}.
\]
It follows that $(f\theta_i)(h\theta_i)=(fh)\theta_i$ and $\theta_i$ is a homomorphism.

Now let $(g_{\delta})_{\delta\in \Delta_{A(i)}}$ be an element of $G_i^{\Delta_{A(i)}}$. Define the element $f\in F_i$ to be the function $\delta\mapsto g_{\delta}$ for each $\delta\in \Delta_{A(i)}$. Then define $h=(h_j)_{j\in I}$ by setting $h_i=f$ and $h_j=z_j$ for $j\in I\backslash\{i\}$. By construction, $h$ is an element of $H_i$ and by Equation~\eqref{eq:thetai},
\[
h\theta_i=(\delta f)_{\delta\in\Delta_{A(i)}}=(g_{\delta})_{\delta\in\Delta_{A(i)}}.
\]
Thus $\theta_i$ is surjective. 

Finally, let $f=(f_{j})_{j\in I}\in H_i$ such that $f\theta_i=(1_i,\dots,1_i)\in G_i^{\Delta_{A(i)}}$. Then $\delta f_i=1_i$ for all $\delta\in\Delta_{A(i)}$ by Equation~\eqref{eq:thetai}. Thus $f_i=z_i$, and consequently, $f=z$ by the definition of $H_i$. Hence $\theta_i$ is injective, and so $\theta_i$ is an isomorphism.
\end{prf}

For an ancestral subset $J$ of $I$ and an element $i\in I$ such that $A(i)\subseteq J$, the action of $F_{A(i)}$ on $\Delta_{A(i)}$ induces an action of $F_J$ on $G_i^{\Delta_{A(i)}}$ as follows
\begin{equation}\label{eq:actiononGDelta}
(g_{\delta})_{\delta\in \Delta_{A(i)}}^f=(g_{\delta (f^{-1}\varphi^J_{A(i)})})_{\delta\in \Delta_{A(i)}},
\end{equation}
where $(g_{\delta})_{\delta\in \Delta_{A(i)}}\in G_i^{\Delta_{A(i)}}$ and $f\in F_J$. The following lemma shows the interaction between this action and the isomorphism $\theta_i$.

\begin{lem}\label{lem:thetaandf}
Let $i$ be a minimal element in $I$. Let $J$ be the ancestral subset $I\backslash\{i\}$. If $f=(f_j)_{j\in I}\in \bar F_J$, then for $h\in H_i$,
\[
(h^f)\theta_i=(h\theta_i)^{f}.
\]
\end{lem}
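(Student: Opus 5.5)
We verify the identity coordinatewise, so we first compute $h^f=f^{-1}hf$ explicitly. Write $f^{-1}=(d_j)_{j\in I}$. Since $f\in\bar F_J$ and $\bar F_J$ is a subgroup (an intersection of the subgroups $L_k$), we have $f^{-1}\in\bar F_J$, so $f_i=d_i=z_i$. Also $h\in H_i$, so $h_j=z_j$ for $j\neq i$. Since $H_i$ is normal in $F$ by Lemma~\ref{lem:Hinormal}, $h^f\in H_i$, and $(h^f)\theta_i$ is determined by the single coordinate $(h^f)_i$. It therefore suffices to show that for every $\delta\in\Delta_{A(i)}$,
\[
\delta (h^f)_i=\bigl(\delta(f^{-1}\varphi_{A(i)})\bigr)h_i ,
\]
because the right-hand side is the $\delta$-entry of $(h\theta_i)^f$ by Equation~\eqref{eq:actiononGDelta} (with $\varphi^J_{A(i)}\varphi_J=\varphi_{A(i)}$).

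To compute $(h^f)_i$, we apply Definition~\ref{def:mul} twice. First set $u=f^{-1}h$. Then
\[
u_i=d_i\bigl(d_{A(i)}h_i\bigr),
\]
so for $\delta\in\Delta_{A(i)}$ we get $\delta u_i=(\delta d_i)\bigl((\delta d_{A(i)})h_i\bigr)=1_i\cdot\bigl(\delta d_{A(i)}\bigr)h_i$. Next set $uf=h^f$. Then
\[
(uf)_i=u_i\bigl(u_{A(i)}f_i\bigr),
\]
and since $f_i=z_i$, the second factor is identically $1_i$. Hence $\delta(h^f)_i=(\delta d_{A(i)})h_i$. Finally, $d_{A(i)}=f^{-1}\varphi_{A(i)}$ by definition of the notation $f_{A(i)}$, which gives the required equality. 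We should also note that $u_{A(i)}=(f^{-1})_{A(i)}h_{A(i)}$, because $\varphi_{A(i)}$ is a homomorphism (Theorem~\ref{thm:generalised}\ref{i:homomorphism}, since $A(i)$ is ancestral), but this quantity is not actually needed.

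The main obstacle is the bookkeeping of left versus right actions. The action in Equation~\eqref{eq:actiononGDelta} uses $f^{-1}$ so that it is a right action. We must check that conjugation $h^f=f^{-1}hf$ produces precisely $\delta\mapsto(\delta f^{-1}_{A(i)})h_i$, rather than the version with $f$ in place of $f^{-1}$. The computation above confirms this, provided the paper's convention is $h^f=f^{-1}hf$ with right actions, which is consistent with its writing $\delta f$ for images.
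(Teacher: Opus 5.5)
Your proposal is correct and follows essentially the same computation as the paper: expand $h^f=(f^{-1}h)f$ via Definition~\ref{def:mul}, use $f_i=d_i=z_i$ to remove the extra factors, and match $\delta(h^f)_i=(\delta(f^{-1}\varphi_{A(i)}))h_i$ with the $\delta$-entry of $(h\theta_i)^f$ from Equation~\eqref{eq:actiononGDelta}. The only minor difference is that you get $d_i=z_i$ from $\bar F_J=L_i$ being a subgroup, whereas the paper cites Lemma~\ref{lem:zitimeszi}\ref{i:fjinverse} directly; these amount to the same thing.
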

In the lemma above, $h^f$ on the left-hand side means the conjugate of $h$ by $f$, while on the right-hand side we are using the action of $F$ on $G_i^{\Delta_{A(i)}}$ as shown in Equation~\eqref{eq:actiononGDelta}.

\begin{prf}
Let $h=(h_j)_{j\in I}\in H_i$. Set $d=(d_j)_{j\in I}=f^{-1}$, $t=(t_j)_{j\in I}=f^{-1}h$ and $u=(u_j)_{j\in I}=h^f=tf$. Using Definition~\ref{def:mul}, we obtain $t_i=d_i(d_{A(i)}h_i)$ and $u_i=t_i(t_{A(i)}f_i)$. As $f\in \bar F_J$, $f_i=z_i$. Then Lemma~\ref{lem:zitimeszi}\ref{i:fjinverse} gives $d_i=z_i$, and so $t_i=d_{A(i)}h_i$. Observe that for all $\delta\in\Delta_{A(i)}$,
\[
\delta u_i=(\delta t_i)((\delta t_{A(i)})f_i)=\delta t_i =\delta d_{A(i)}h_i.
\]
As $d_{A(i)}=f^{-1}\varphi_{A(i)}$, it follows that $\delta u_i=(\delta(f^{-1}\varphi_{A(i)}))h_i$. Then Equation~\eqref{eq:thetai} gives
\begin{equation}\label{eq:uthetai}
u\theta_i=(\delta u_i)_{\delta\in \Delta_{A(i)}}=((\delta (f^{-1}\varphi_{A(i)}))h_i)_{\delta\in\Delta_{A(i)}}.
\end{equation}
We will turn to $(h\theta_i)^f$ in the following. We have $h\theta_i=(\delta h_i)_{\delta\in \Delta_{A(i)}}$. Then set $(g_{\delta})_{\delta\in \Delta_{A(i)}}=(\delta h_i)_{\delta\in \Delta_{A(i)}}$. Therefore,
\[
(\delta (f^{-1}\varphi_{A(i)}))h_i=g_{\delta (f^{-1}\varphi_{A(i)})}
\]
for all $\delta\in \Delta_{A(i)}$. Then we obtain that 
\begin{equation}\label{eq:htof}
(h\theta_i)^{f}=((g_{\delta})_{\delta\in\Delta_{A(i)}})^{f}=(g_{\delta (f^{-1}\varphi_{A(i)})})_{\delta\in\Delta_{A(i)}}=((\delta (f^{-1}\varphi_{A(i)}))h_i)_{\delta\in\Delta_{A(i)}}
\end{equation}
By comparing Equations~\eqref{eq:uthetai} and \eqref{eq:htof}, $(h^f)\theta_i=(h\theta_i)^{f}$.
\end{prf}

For $i\in I$, let $\varepsilon_i\in\Delta_{A(i)}$ be a fixed element. By Lemma~\ref{lem:Hidirectproduct}, there exists a subgroup $D_i^J$ of $H_i^J$ such that
\begin{equation}\label{eq:DiJdefinition}
D_i^J\theta_i^J=\trivsubgp\times\dots\times\trivsubgp\times \underset{\mathclap{\substack{\uparrow \\ \text{$\varepsilon_i$\nbd coordinate}}}}{G_i}\times\trivsubgp\times\dots\times\trivsubgp\leq G_i^{\Delta_{A(i)}}.
\end{equation}
By Equation~\eqref{eq:thetai}, $D^J_i$ has the following form:
\begin{equation}\label{eq:Diset}
D_i^J=\set{f=(f_j)_{j\in J}}{\text{$f_j=z_j$ for $j\neq i$ and $\delta f_i=1_i$ for $\delta\neq\varepsilon_i$}}.
\end{equation}
In the case that $J=I$, we shall write $D_i$ for $D^I_i$. If $i$ is a maximal element of $J$, then $\Delta_{A(i)}$ is a singleton and hence $G_i^{\Delta_{A(i)}}=G_i$, and in this case, $D_i^J=H_i^J$. 

Since 
\[
G_i^{\Delta_{A(i)}}=\langle\trivsubgp\times\dots\times\trivsubgp\times \underset{\mathclap{\substack{\uparrow \\ \text{$\delta$\nbd coordinate}}}}{G_i}\times\trivsubgp\times\dots\times\trivsubgp: \delta\in\Delta_{A(i)}\rangle,
\]
and $H_i\theta_i= G_i^{\Delta_{A(i)}}$, by the definition of $D_i$ in Equation~\eqref{eq:DiJdefinition} and Lemma~\ref{lem:thetaandf}, we have the following generating set for $H_i$ if  $i$ is a minimal element in $I$ and the permutation group $(G_j,\Delta_j)$ is transitive for each $j\in I$.

\begin{lem}\label{lem:Digeneration}
Let $i$ be a minimal element in $I$. Set $J=I\backslash\{i\}$. Suppose that the permutation group $(G_j,\Delta_j)$ is transitive for each $j>i$. Then
\[
H_i=\langle (D_i)^f:f\in \bar F_J\rangle.
\]
\end{lem}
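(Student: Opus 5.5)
We prove the two inclusions separately, pushing everything through the isomorphism $\theta_i\colon H_i\to G_i^{\Delta_{A(i)}}$ of Lemma~\ref{lem:Hidirectproduct}.

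\emph{The easy inclusion.} By Lemma~\ref{lem:Hinormal}\ref{i:Hinormal}, $H_i$ is normal in $F$. Since $D_i\leq H_i$, every conjugate $(D_i)^f$ with $f\in\bar F_J$ lies in $H_i$. Hence $\langle (D_i)^f: f\in\bar F_J\rangle\leq H_i$.

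\emph{Reduction for the reverse inclusion.} Because $\theta_i$ is an isomorphism, it suffices to show that the images $(D_i)^f\theta_i$, for $f\in\bar F_J$, generate $G_i^{\Delta_{A(i)}}$. By Lemma~\ref{lem:thetaandf},
\[
(D_i)^f\theta_i=(D_i\theta_i)^f .
\]
By Equation~\eqref{eq:DiJdefinition}, $D_i\theta_i$ is the copy of $G_i$ sitting in the $\varepsilon_i$-coordinate. Under the action in Equation~\eqref{eq:actiononGDelta}, an element $(g_\delta)_\delta$ supported only at $\varepsilon_i$ is sent to the element whose $\delta$-entry is $g_{\delta(f^{-1}\varphi_{A(i)})}$. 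This entry is nontrivial only when $\delta(f^{-1}\varphi_{A(i)})=\varepsilon_i$, that is, when $\delta=\varepsilon_i(f\varphi_{A(i)})$. So $(D_i\theta_i)^f$ is exactly the full copy of $G_i$ in the coordinate $\varepsilon_i(f\varphi_{A(i)})$.

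\emph{The key step: transitivity.} It remains to show that, as $f$ ranges over $\bar F_J$, the point $\varepsilon_i(f\varphi_{A(i)})$ ranges over all of $\Delta_{A(i)}$. By Lemma~\ref{lem:barFKiso}, $\bar F_J\varphi_J=F_J$. Since $A(i)$ is an ancestral subset of $J$, Theorem~\ref{thm:generalised}\ref{i:homomorphism} shows that $\varphi^J_{A(i)}$ maps $F_J$ onto $F_{A(i)}$. Therefore $\bar F_J\varphi_{A(i)}=F_{A(i)}$. Now $F_{A(i)}$ is the generalised wreath product of $(G_j,\Delta_j)_{j\in A(i)}$. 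Each of these factors is transitive by hypothesis, since every $j\in A(i)$ satisfies $j>i$. By Lemma~\ref{lem:transitive}, $F_{A(i)}$ is transitive on $\Delta_{A(i)}$. Hence for every $\delta\in\Delta_{A(i)}$ there is $f\in\bar F_J$ with $\varepsilon_i(f\varphi_{A(i)})=\delta$. (If $A(i)=\emptyset$, then $\Delta_{A(i)}$ is a singleton and $D_i=H_i$, so there is nothing to prove.)

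\emph{Conclusion.} The conjugates $(D_i\theta_i)^f$ therefore include every coordinate copy of $G_i$. These copies generate $G_i^{\Delta_{A(i)}}$, so applying $\theta_i^{-1}$ gives $H_i\leq\langle (D_i)^f: f\in\bar F_J\rangle$, and with the first inclusion the two groups are equal.

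The main point needing care is the identification $\bar F_J\varphi_{A(i)}=F_{A(i)}$, together with checking that the conjugation action of Lemma~\ref{lem:thetaandf} genuinely permutes the coordinates as computed; both follow directly from the cited results.
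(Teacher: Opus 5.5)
Your proposal is correct and follows essentially the same route as the paper: you transfer the problem through the isomorphism $\theta_i$, use Lemma~\ref{lem:thetaandf} to turn conjugation by $f\in\bar F_J$ into a permutation of the coordinates of $G_i^{\Delta_{A(i)}}$, and use the transitivity of $F_{A(i)}$ (Lemma~\ref{lem:transitive}) so that the conjugates of the $\varepsilon_i$-coordinate copy cover every coordinate. You also make explicit two points the paper leaves implicit: the inclusion $\langle (D_i)^f : f\in\bar F_J\rangle\leq H_i$ via normality of $H_i$, and the surjectivity $\bar F_J\varphi_{A(i)}=F_{A(i)}$, which is what lets elements of $\bar F_J$ realise the transitive action.
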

\begin{prf}
According to Lemma~\ref{lem:transitive}, $F_{A(i)}$ is transitive on $\Delta_{A(i)}$. It follows from Equation~\eqref{eq:actiononGDelta} that
\[
G_i^{\Delta_{A(i)}}=\langle (\trivsubgp\times\dots\times\trivsubgp\times \underset{\mathclap{\substack{\uparrow \\ \text{$\epsilon_i$\nbd coordinate}}}}{G_i}\times\trivsubgp\times\dots\times\trivsubgp)^f: f\in\bar F_J\rangle.
\]
By the definition of $D_i$ in Equation~\eqref{eq:DiJdefinition}, $G_i^{\Delta_{A(i)}}=\langle (D_i\theta_i)^f: f\in\bar F_J\rangle$.
Using $(D_i\theta_i)^f=(D_i^f)\theta_i$ from Lemma~\ref{lem:thetaandf} and the fact that $\theta_i: H_i\to G_i^{\Delta_{A(i)}}$ is an isomorphism, we deduce that $H_i=\langle (D_i)^f:f\in \bar F_J\rangle$.
\end{prf}

If $i$ is a minimal element and $J=I\backslash\{i\}$, then Corollary~\ref{cor:FJsemi} gives that $F=H_i\rtimes \bar F_J$. In the following, we will further show that $F$ can be expressed as the wreath product $G_i\wr_{\Delta_{A(i)}} F_J$, where the action of $F_J$ on $G_i^{\Delta_{A(i)}}$ is as given in Equation~\eqref{eq:actiononGDelta}. We will also describe the structure of $F$ when $I$ is a chain or an antichain. Finally, we will construct generating sets for $F$ using the subgroups $H_i$ and $D_i$.

There are isomorphisms $\varphi_J:\bar F_J\to F_J$ and $\theta_i:H_i\to G_i^{\Delta_{A(i)}}$. The purpose of the following lemma is to exploit these isomorphisms to convert the semi-direct product decomposition $F\cong H_i\rtimes \bar F_J$ to a wreath product decomposition $G_i\wr_{\Delta_{A(i)}} F_J$.

\begin{lem}\label{lem:Generalisediswreath}
Suppose that $i$ is a minimal element of $I$. Set $J=I\backslash\{i\}$. Then
\[
F\cong G_i\wr_{\Delta_{A(i)}} F_J
\]
where as in Equation~\eqref{eq:actiononGDelta}, for $f\in F_J$ and $(g_{\delta})_{\delta\in\Delta_{A(i)}}\in G_i^{\Delta_{A(i)}}$,
\[
(g_{\delta})_{\delta\in\Delta_{A(i)}}^f=(g_{\delta (f^{-1}\varphi^J_{A(i)})})_{\delta\in\Delta_{A(i)}}.
\]
Moreover, 
\begin{enumerate}
\item\label{i:unique}
if $i$ is the unique minimal element of $I$, then $A(i)=J$ and the action of $F_J$ on $G_i^{\Delta_{A(i)}}$ is faithful;
\item\label{i:maximal}
if $i$ is incomparable to all the rest of elements of $I$, then $F\cong G_i\times F_J$.
\end{enumerate}
\end{lem}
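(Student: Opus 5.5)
The plan is to combine Corollary~\ref{cor:FJsemi} with the isomorphisms $\theta_i$ of Lemma~\ref{lem:Hidirectproduct} and $\varphi_J|_{\bar F_J}$ of Lemma~\ref{lem:barFKiso}. Since $F=H_i\rtimes\bar F_J$, it suffices to build an isomorphism
\[
\Psi: H_i\rtimes\bar F_J\to G_i^{\Delta_{A(i)}}\rtimes F_J,\qquad hf\mapsto (h\theta_i,\ f\varphi_J),
\]
where the right-hand semidirect product uses the action in Equation~\eqref{eq:actiononGDelta}. The map $\Psi$ is a bijection because both component maps are bijections and every element of $F$ is uniquely $hf$. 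For it to be a homomorphism, the conjugation action of $\bar F_J$ on $H_i$ must be carried to the stated action of $F_J$. This is exactly Lemma~\ref{lem:thetaandf}, once we note that for $f\in\bar F_J$ the action of $f$ defined through $f^{-1}\varphi_{A(i)}$ coincides with the action of $f\varphi_J\in F_J$ defined through $(f\varphi_J)^{-1}\varphi^J_{A(i)}$. This holds because $\varphi^J_{A(i)}\circ\varphi_J=\varphi_{A(i)}$ and $\varphi_J$ is a homomorphism (Theorem~\ref{thm:generalised}). Then $(h_1f_1)(h_2f_2)=h_1h_2^{f_1^{-1}}f_1f_2$ maps to the wreath product multiplication, which gives $F\cong G_i\wr_{\Delta_{A(i)}}F_J$.

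For \ref{i:unique}: if $i$ is the unique minimal element of the finite poset $I$, every $j\neq i$ lies above some minimal element, hence above $i$. So $A(i)=J$, and $\varphi^J_{A(i)}$ is the identity. The action then permutes coordinates via the permutation action of $F_J$ on $\Delta_J$. This action is faithful by Theorem~\ref{thm:generalised}\ref{i:permutation}, and $G_i\neq 1$ because $\Delta_i$ is non-singleton and $G_i$ is nontrivial in our setting. (If $G_i$ could be trivial, faithfulness would fail, so I would note this assumption.) Faithfulness on coordinates then gives a faithful action on $G_i^{\Delta_J}$: for $f\neq z_J$, pick $\delta$ with $\delta f^{-1}\neq\delta$ and a tuple that is nontrivial only at the coordinate $\delta f^{-1}$.

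For \ref{i:maximal}: if $i$ is incomparable to everything, then $A(i)=\emptyset$ and $\Delta_{A(i)}$ is a singleton, the empty product. So $G_i^{\Delta_{A(i)}}=G_i$, and the action of $F_J$ is trivial since it permutes a single coordinate. The semidirect product is therefore direct.

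The main obstacle is bookkeeping in the first step, namely checking that the conjugation convention ($h^f=f^{-1}hf$, right actions) matches the multiplication rule of the wreath product with the inverse appearing in Equation~\eqref{eq:actiononGDelta}. Lemma~\ref{lem:thetaandf} handles the essential identity, so only the compatibility $\varphi^J_{A(i)}\circ\varphi_J=\varphi_{A(i)}$ needs to be verified.
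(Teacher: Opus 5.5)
Your proposal is correct and follows the paper's proof essentially step for step. Like the paper, you transport the decomposition $F=H_i\rtimes\bar F_J$ of Corollary~\ref{cor:FJsemi} through $\theta_i$ and $\varphi_J|_{\bar F_J}$, use Lemma~\ref{lem:thetaandf} together with the compatibility of $\varphi_J$ and $\varphi^J_{A(i)}$ with $\varphi_{A(i)}$, and then read off (i) from Theorem~\ref{thm:generalised}\ref{i:permutation} and (ii) from $\Delta_{A(i)}$ being a singleton. Your extra remarks fill in small points the paper leaves implicit: why $A(i)=J$ when $i$ is the unique minimal element, and that faithfulness on $G_i^{\Delta_{A(i)}}$ (rather than on $\Delta_{A(i)}$) needs $G_i\neq 1$, which holds for the symmetric groups in the application.
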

\begin{prf}
Corollary~\ref{cor:FJsemi} tells us that $F=H_i\rtimes \bar F_J$. For $(h, f)\in H_i\rtimes \bar F_J$ with $h\in H_i$ and $f\in \bar F_J$, let $\varphi: H_i\rtimes \bar F_J \to G_i\wr_{\Delta_{A(i)}} F_J$ be defined as follows: 
\[
(h, f)\varphi=(h\theta_i)(f\varphi_J).
\]
Since $\theta_k$ and $\varphi_J$ are isomorphisms, it suffices to prove that for all $h\in H_i$ and $f\in\bar F_J$, $(h\theta_i)^{f\varphi_J}=(h^f)\theta_i$. Lemma~\ref{lem:thetaandf} gives that for $f\in \bar F_J$, $(h^f)\theta_i=(h\theta_i)^{f}$. As the actions of $F$ and $F_J$ on $G_i^{\Delta_{A(i)}}$ are induced by $\varphi_{A(i)}$ and $\varphi^{J}_{A(i)}$ respectively as given in Equation~\eqref{eq:actiononGDelta}, it follows that for $f\in\bar F_J$, $(h\theta_i)^{f\varphi_J}=(h\theta_i)^{f}$. Hence we conclude that $\varphi$ is an isomorphism and
\[
F\cong G_i\wr_{\Delta_{A(i)}} F_J.
\]

\ref{i:unique} Furthermore, if $i$ is the unique minimal element in $I$, then $J=A(i)$. Then $F_J$ is a permutation group on $\Delta_{A(i)}$ by Theorem~\ref{thm:generalised}\ref{i:permutation}, and so the action of $F_J$ on $G_i^{\Delta_{A(i)}}$ is faithful.

\ref{i:maximal} Suppose that $i$ is incomparable to all the rest of elements of $I$. Then in this case, $A(i)$ is empty and $\Delta_{A(i)}$ is  a singleton. Hence $F_J$ acts trivially on $G_i^{\Delta_{A(i)}}$, and so $F\cong G_i\times F_J$.
\end{prf}

The following two conclusions are given without proof in \cite{Generalised}. We also state them without proof below. One can observe that they can be deduced by using Lemma~\ref{lem:Generalisediswreath} and applying induction.

\begin{lem}\label{lem:chain}
Suppose that a partially order set $I=\{i_1,i_2,\dots, i_n\}$ is a chain such that
\[
i_1<i_2<\dots <i_n.
\]
Then
\[
F\cong G_{i_1}\wr (G_{i_2}\wr(\dots\wr (G_{i_{n-1}}\wr G_{i_n})\dots)),
\]
where the iterated wreath product is constructed via the action of the permutation group $(G_i, \Delta_i)$ for each $i\in I$.
\end{lem}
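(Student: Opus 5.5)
The proof will be an induction on $n=\order{I}$, using Lemma~\ref{lem:Generalisediswreath} repeatedly. For $n=1$ the statement is a tautology: $I=\{i_1\}$ and $F=F_{i_1}$ is the set of functions from the singleton $\Delta_{A(i_1)}$ into $G_{i_1}$. This identifies $F$ with $(G_{i_1},\Delta_{i_1})$ via $\theta_{i_1}$, and by Definition~\ref{def:Fdef} the action on $\Delta=\Delta_{i_1}$ is the given action of $G_{i_1}$.

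For the inductive step, let $I$ be a chain of length $n\geq 2$. Its unique minimal element is $i_1$. Set $J=I\backslash\{i_1\}=\{i_2,\dots,i_n\}$, which is ancestral and is itself a chain. Since $i_1$ is the unique minimal element, Lemma~\ref{lem:Generalisediswreath} and part~\ref{i:unique} give $A(i_1)=J$ and
\[
F\cong G_{i_1}\wr_{\Delta_J} F_J ,
\]
where $F_J$ acts faithfully on $\Delta_J$ through its natural action as a generalised wreath product (Theorem~\ref{thm:generalised}\ref{i:permutation}). Here $\varphi^J_{A(i_1)}$ is the identity, so the action in Equation~\eqref{eq:actiononGDelta} is just coordinate permutation by the action of $F_J$ on $\Delta_J$. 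That is exactly the action used in the standard permutational wreath product. By the induction hypothesis applied to the chain $J$, the permutation group $(F_J,\Delta_J)$ is permutation isomorphic to $G_{i_2}\wr(\dots\wr(G_{i_{n-1}}\wr G_{i_n})\dots)$ acting in product action on $\Delta_{i_2}\times\dots\times\Delta_{i_n}$. Substituting this gives the claimed iterated wreath product.

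The one point needing care, and the main obstacle, is that the induction must carry a statement about \emph{permutation} groups, not just abstract groups. The wreath product $G_{i_1}\wr_{\Delta_J}F_J$ depends on the action of $F_J$ on $\Delta_J$, not only on its isomorphism type. So I will strengthen the inductive claim: the isomorphism $F\to G_{i_1}\wr(\cdots)$ is a permutation isomorphism, with $\Delta=\Delta_{i_1}\times\Delta_J$ mapped identically to the imprimitive action of the iterated wreath product.

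To verify this, I will check directly from Definition~\ref{def:Fdef} how the image of $(h,f)\in H_{i_1}\rtimes\bar F_J$ acts. It sends $(\delta_{i_1},\delta_J)$ to
\[
(\delta_{i_1}(\delta_J h_{i_1}),\ \delta_J f\varphi_J).
\]
Here I use Equation~\eqref{eq:fphiJ} for the $J$-coordinates and the fact that $f_{i_1}=z_{i_1}$. This is precisely the imprimitive action of the base-group element $(\delta h_{i_1})_\delta$ followed by the top element $f\varphi_J$. With this permutation version of the isomorphism in hand, the induction closes.
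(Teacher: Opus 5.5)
Your proof is correct and follows the route the paper has in mind: the paper states this lemma without proof, remarking only that it follows from Lemma~\ref{lem:Generalisediswreath} by induction. Your strengthening of the inductive claim to a permutation isomorphism on $\Delta=\Delta_{i_1}\times\Delta_J$ supplies exactly the detail that hint leaves implicit. The action there is the imprimitive one, not the ``product action'' you mention once in passing. Your computation $(\delta_{i_1},\delta_J)\mapsto(\delta_{i_1}(\delta_J h_{i_1}),\,\delta_J(f\varphi_J))$ checks out.
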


\begin{lem}\label{lem:antichain}
Suppose that a partially ordered set $I$ is an antichain. Then 
\[
F\cong \prod_{i\in I}G_i.
\]
\end{lem}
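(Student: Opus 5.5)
The statement to prove is Lemma~\ref{lem:antichain}. I would argue by induction on $\order{I}$, using Lemma~\ref{lem:Generalisediswreath}\ref{i:maximal}.

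\emph{Base case.} If $\order{I}=1$, say $I=\{i\}$, then $A(i)=\emptyset$. So $\Delta_{A(i)}$ is a singleton (the empty product) and $F_i$ is the set of functions from a one-point set into $G_i$. The map $\theta_i$ of Lemma~\ref{lem:Hidirectproduct}, with $H_i=F$ here, then gives $F\cong G_i$.

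\emph{Inductive step.} Suppose $\order{I}\geq2$ and pick any $i\in I$. Since $I$ is an antichain, $i$ is minimal and is incomparable to every other element. Put $J=I\backslash\{i\}$.

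First, $J$ is ancestral: no element of $I$ is strictly greater than an element of $J$, so the condition is vacuous. By the observation following Lemma~\ref{lem:relationpreserve}, $F_J$ is the generalised wreath product built from $(G_j,\Delta_j)_{j\in J}$. Moreover $J$ is again an antichain, with $\order{J}=\order{I}-1$.

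Lemma~\ref{lem:Generalisediswreath}\ref{i:maximal} now gives $F\cong G_i\times F_J$. The induction hypothesis gives $F_J\cong\prod_{j\in J}G_j$. Together these yield $F\cong\prod_{i\in I}G_i$.

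There is essentially no obstacle here. The only points needing care are checking that $J$ is ancestral, so the inductive hypothesis applies to $F_J$ as a generalised wreath product in its own right, and handling the base case, where the empty product $\Delta_{\emptyset}$ must be read as a singleton. Both are immediate from the definitions.
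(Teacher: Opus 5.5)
Your proof is correct and is the argument the paper has in mind. The paper states this lemma without proof, remarking only that it follows from Lemma~\ref{lem:Generalisediswreath} by induction; your base case $\order{I}=1$ together with part~\ref{i:maximal} applied to any element of the antichain supplies exactly that induction.
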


In the next lemma, we will show that $F$ is generated by the collection of $H_i$ where $i$ ranges through $I$. Then using Lemma~\ref{lem:Digeneration}, provided that $(G_i,\Delta_i)$ is transitive for each $i\in I$, we will further show that $F$ can be generated by the collection of $D_i$.

\begin{lem}\label{lem:FHphi}
Let $I$ be a finite partially ordered set and let $F$ be the generalised wreath product with respect to a sequence of finite permutation groups $(G_i, \Delta_i)_{i\in I}$. Then
\[
F=\langle H_i: i\in I\rangle.
\]
\end{lem}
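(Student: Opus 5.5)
We prove $F=\langle H_i: i\in I\rangle$ by induction on $\order{I}$, using the semidirect product decomposition of Corollary~\ref{cor:FJsemi}. If $\order{I}=1$, say $I=\{i\}$, then $H_i=F$ by definition, since there are no other coordinates, and there is nothing to prove.

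For the inductive step, choose a minimal element $i$ of $I$; it exists because $I$ is finite. Set $J=I\backslash\{i\}$, which is ancestral. By Corollary~\ref{cor:FJsemi}, $F=H_i\bar F_J$, so it suffices to show that $\bar F_J\leq\langle H_k: k\in I\rangle$.

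By Lemma~\ref{lem:barFKiso}, $\varphi_J$ restricts to an isomorphism $\bar F_J\to F_J$. The set $J$ is again a finite poset, and, as observed in Section~\ref{sec:Generalised}, $F_J$ is the generalised wreath product built from $(G_k,\Delta_k)_{k\in J}$. Its subgroups $H_k^J$ are therefore exactly the ``$H_k$'' of that smaller product. By induction, $F_J=\langle H^J_k: k\in J\rangle$.

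We now lift back along $\varphi_J$. For $k\in J$, Lemma~\ref{lem:barFKiso} gives $H_k\varphi_J=H^J_k$. Moreover, $H_k\leq\bar F_J$, because every element of $H_k$ has $i$-coordinate $z_i$ and $i\neq k$. Since $\varphi_J|_{\bar F_J}$ is injective, the preimage of $H_k^J$ in $\bar F_J$ is exactly $H_k$. Hence
\[
\bar F_J=\bigl(\langle H^J_k:k\in J\rangle\bigr)(\varphi_J|_{\bar F_J})^{-1}=\langle H_k: k\in J\rangle.
\]
It follows that $F=H_i\bar F_J=\langle H_k: k\in I\rangle$.

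The only point needing care is the identification used in this lifting step. We must check that $H_k\subseteq \bar F_J$, which is immediate, and that the inductive hypothesis applies to $F_J$ viewed as a generalised wreath product over $J$. The latter is exactly the remark preceding Equation~\eqref{eq:fphiJ}: for $j$ in an ancestral set, $A(j)$ is the same computed in $I$ or in $J$, so $F_j$ and the multiplication agree. With these observations the argument is essentially bookkeeping.
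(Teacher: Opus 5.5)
Your proposal is correct and follows essentially the same route as the paper: induction on $\order{I}$, removal of a minimal element $i$, the decomposition $F=H_i\rtimes\bar F_J$, and lifting the inductive generating set of $F_J$ back along the isomorphism $\varphi_J|_{\bar F_J}$ using $H_k\varphi_J=H_k^J$. Your explicit check that $H_k\leq\bar F_J$ for $k\in J$, which the paper leaves implicit, is a welcome addition.
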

\begin{prf}
We will use induction on the cardinality of $I$ to prove the lemma. Suppose that $I=\{i\}$. Then, using Equation~\eqref{eq:Hi}, we deduce that $H_i=\set{f_i}{f_i\in F_i}= F$.

Now let $n\geq1$ be an integer. Suppose that the lemma holds for partially ordered sets of cardinality $n$. Then assume that $\order{I}=n+1$. Let $j$ be a minimal element in $I$, and let $J=I\backslash\{j\}$. Then $J$ is an ancestral subset and Corollary~\ref{cor:FJsemi} tells us that $F=H_j\rtimes \bar F_J$. It suffices to show that $\bar F_J=\langle H_i: i\in J\rangle$. Recall the definition of $H^J_i$ in Equation~\eqref{eq:Hi}. As $F_J$ is the generalised wreath product of the permutation groups $(G_i, \Delta_i)_{i\in J}$, according to the inductive hypothesis, $F_J=\langle H^J_i:i\in J\rangle$. By Lemma~\ref{lem:barFKiso}, $\varphi_J$ restricted on $\bar F_J$ is an isomorphism and $\bar F_J\varphi_J= F_J$. Then $H_i\varphi_J=H^J_i$ for $i\in J$, and so $\bar F_J=\langle H_i: i\in J\rangle$. The proof is completed.
\end{prf}

In the following lemma, we construct a generating set for $F$ with $D_i$.
\begin{lem}\label{lem:GeneralisedD}
Let $I$ be a finite partially ordered set and let $F$ be the generalised wreath product with respect to a sequence of finite transitive permutation groups $(G_i, \Delta_i)_{i\in I}$. Then
\[
F=\langle D_i:i\in I\rangle.
\]
\end{lem}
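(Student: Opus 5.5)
We argue by induction on $\order{I}$, combining Lemma~\ref{lem:FHphi} with Lemma~\ref{lem:Digeneration}. If $I=\{i\}$ is a singleton, then $A(i)=\emptyset$ and $\Delta_{A(i)}$ is a singleton. So by the remark after Equation~\eqref{eq:Diset}, $D_i=H_i=F$, and there is nothing to prove.

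For the inductive step, assume $\order{I}\geq2$. Choose a minimal element $i$ of $I$ and set $J=I\backslash\{i\}$, which is ancestral. By Corollary~\ref{cor:FJsemi}, $F=H_i\rtimes\bar F_J$, so it suffices to show two things: that $\bar F_J\leq\langle D_k:k\in I\rangle$, and that $H_i\leq\langle D_k:k\in I\rangle$.

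For $\bar F_J$: the groups $(G_k,\Delta_k)_{k\in J}$ are still transitive, so the inductive hypothesis gives $F_J=\langle D^J_k:k\in J\rangle$. By Lemma~\ref{lem:barFKiso}, $\varphi_J$ restricts to an isomorphism $\bar F_J\to F_J$. We then check that $D_k\subseteq\bar F_J$ and $D_k\varphi_J=D^J_k$ for each $k\in J$. This holds because, by Equation~\eqref{eq:Diset}, an element of $D_k$ has $i$-coordinate $z_i$ (as $i\neq k$). Also, $A(k)\subseteq J$ for $k\in J$ (as $J$ is ancestral), so the chosen base point $\varepsilon_k\in\Delta_{A(k)}$ and the defining conditions are the same whether computed in $I$ or in $J$. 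Pulling back along the isomorphism gives $\bar F_J=\langle D_k:k\in J\rangle$.

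For $H_i$: every $G_j$ is transitive, in particular every $G_j$ with $j>i$, so Lemma~\ref{lem:Digeneration} gives $H_i=\langle D_i^f:f\in\bar F_J\rangle$. Since $\bar F_J$ is already contained in $\langle D_k:k\in J\rangle$, each conjugate $D_i^f$ lies in $\langle D_k:k\in I\rangle$. Hence $H_i\leq\langle D_k:k\in I\rangle$, and $F=H_i\bar F_J=\langle D_k:k\in I\rangle$.

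The only delicate point is the compatibility $D_k\varphi_J=D^J_k$ with $D_k\leq\bar F_J$. This amounts to unwinding Equations~\eqref{eq:DiJdefinition} and~\eqref{eq:Diset} together with the observation in Section~\ref{sec:Generalised} that $F_J$ and $\Delta_{A(k)}$ do not depend on whether they are computed in $I$ or in $J$. The rest is a direct assembly of the earlier lemmas.
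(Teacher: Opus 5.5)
Your proposal is correct and follows essentially the same route as the paper: induction on $\order{I}$, the decomposition $F=H_i\rtimes\bar F_J$ for a minimal $i$, transporting the inductive hypothesis $F_J=\langle D^J_k:k\in J\rangle$ back to $\bar F_J$ via the isomorphism $\varphi_J$ of Lemma~\ref{lem:barFKiso}, and then applying Lemma~\ref{lem:Digeneration} to obtain $H_i$ from the conjugates of $D_i$ by $\bar F_J$. The paper simply asserts $D_k\varphi_J=D^J_k$, whereas you also check that $D_k\leq\bar F_J$ and that the base point $\varepsilon_k$ is the same whether computed in $I$ or in $J$, so your version is slightly more careful.
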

\begin{prf}
We will use induction on the cardinality of $I$ to prove the lemma. If $I=\{i\}$, then $D_i=H_i$ and so $D_i=F$.

Let $n\geq1$ be an integer. Suppose the lemma holds for partially ordered sets of cardinality $n$. Now assume that $\order{I}=n+1$. Let $j$ be a minimal element in $I$ and set $J=I\backslash\{j\}$. Corollary~\ref{cor:FJsemi} tells us that $F=H_j\rtimes \bar F_J$.
As $F_J$ is the generalised wreath product of the permutation groups $(G_i, \Delta_i)_{i\in J}$, it follows from the inductive hypothesis that $F_J=\langle D^J_i:i\in J\rangle$.
By Lemma~\ref{lem:barFKiso}, $\varphi_J$ restricted on $\bar F_J$ is an isomorphism and $\bar F_J\varphi_J=F_J$. Then $D_i\varphi_J=D^J_i$ for $i\in J$, and so $\bar F_J=\langle D_i:i\in J \rangle$. As each $G_i$ is transitive, Lemma~\ref{lem:Digeneration} establishes $
H_j=\langle (D_j)^f:f\in \bar F_J\rangle$. Hence
\[
F=H_j\rtimes \bar F_J=\langle D_i:i\in I\rangle.
\]
\end{prf}

\section{Proof of Theorem~\ref{thm:chap4main1}}\label{sec:proof}

In this section, we will prove Theorem~\ref{thm:chap4main1} and we will use induction on the cardinality of partially ordered sets. We will first prove that the theorem holds when the partially ordered set has two or three elements. Throughout this section, let $I$ be a partially ordered set. Let $(\Sym(\Delta_i),\Delta_i)_{i\in I}$ be a sequence of non-trivial symmetric groups. Denote by $F$ the generalised wreath product constructed from the sequence $(\Sym(\Delta_i),\Delta_i)_{i\in I}$. We will first determine the structure of $I$ when $\order{I}=2, 3$ and then establish $d(F)$ in this case.

If $\order{I}=2$, we will write $I=\{i,  j\}$, and if $I$ has exactly $3$ elements, then we will write $I=\{i,j,k\}$. We determine the structure of $I$ as follows without proof. 
\begin{lem}\label{lem:I}
Let $I$ be a partially ordered set such that $\order{I}=2, 3$. Then $I$ is one of the following up to isomorphisms:
\begin{enumerate}
\item\label{i:chain}
$I$ is a chain;
\item\label{i:antichain}
$I$ is an antichain;
\item\label{i:triangle}
$\order{I}=3$ and for $I=\{i,j,k\}$, $i<j$, $i<k$ and $j\perp k$;
\item\label{i:pyramid}
$\order{I}=3$ and for $I=\{i,j,k\}$, $i<k$, $j<k$ and $i\perp j$;
\item\label{i:wrdi}
$\order{I}=3$ and for $I=\{i,j,k\}$, $i<k$, $i\perp j$ and $j\perp k$.
\end{enumerate}
\end{lem}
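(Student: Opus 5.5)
The statement is a finite classification of posets with two or three elements up to isomorphism, so I will argue by counting the comparable pairs. Transitivity and antisymmetry constrain which sets of comparabilities are possible.

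If $\order{I}=2$, the two elements are either comparable, giving a chain and case~\ref{i:chain}, or incomparable, giving an antichain and case~\ref{i:antichain}.

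Now assume $\order{I}=3$. Consider the comparability graph on $I$, whose edges join comparable pairs. It has at most three edges, and I split on the number of edges.

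\emph{No edges.} Then $I$ is an antichain, which is case~\ref{i:antichain}.

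\emph{One edge.} Label the comparable pair so that it is $i<k$. The remaining element is incomparable to both, so we may call it $j$. This is case~\ref{i:wrdi}.

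\emph{Two edges.} The two edges share a common vertex, since there are only three vertices. There are then two sub-cases.
\begin{enumerate}
\item The common vertex is below both others. Name it $i$, so $i<j$ and $i<k$, with $j\perp k$. This is case~\ref{i:triangle}.
\item The common vertex is above both others. Name it $k$, so $i<k$ and $j<k$, with $i\perp j$. This is case~\ref{i:pyramid}.
\end{enumerate}
The only remaining possibility is that the common vertex is above one element and below the other, say $a<b<c$. Transitivity then forces $a<c$, which is a third edge. This contradicts the assumption that there are exactly two edges, so this configuration cannot occur.

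\emph{Three edges.} Every pair is comparable, so $I$ is totally ordered and hence a chain, which is case~\ref{i:chain}.

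The only step needing care is the transitivity argument that rules out the mixed configuration when there are two edges. The rest is a matter of relabelling the elements.
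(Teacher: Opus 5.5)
Your proof is correct and complete. The paper states this lemma without proof, so there is no argument to compare against. Your case split on the number of comparable pairs is exactly the routine check the paper omits, including the transitivity step that excludes $a<b<c$ with only two comparabilities.
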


Before we determine $d(F)$, we will first introduce the following result.

\begin{lem}\label{lem:symwreath}
If $H$ is a non-trivial symmetric group and $(G, X)$ is a transitive permutation group with $d(G)\geq2$, then
\[
d(H\wr_X G)=\max(2, d(C_2\wr G)).
\]
\end{lem}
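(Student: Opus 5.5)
We prove the two inequalities separately. Write $W=H\wr_X G=B\rtimes G$ with base group $B=H^X$, and let $H=\Sym(\Delta)$, where $\order{\Delta}=m\geq2$.

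\emph{Lower bound.} The sign map $\operatorname{sgn}\colon H\to C_2$ is surjective. Applied coordinatewise it gives a surjection $H^X\to C_2^X$, and this surjection is equivariant for the permutation action of $G$ on coordinates. It therefore extends to a surjective homomorphism $W\to C_2\wr_X G$, so $d(W)\geq d(C_2\wr_X G)$. Since $d(W)\geq d(G)\geq2$ via the projection onto $G$, the bound $d(W)\geq \max(2,d(C_2\wr G))$ follows. In fact this maximum is just $d(C_2\wr G)$, because $C_2\wr G$ also maps onto $G$.

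\emph{Upper bound.} Put $d=d(C_2\wr_X G)\geq2$. The plan is to lift a $d$-element generating set of $C_2\wr_X G$ to $W$ and then adjust it so that it generates all of $B$. The main input is the structure of $H$: the group $H$ has the unique proper nontrivial normal subgroup $A=\operatorname{Alt}(\Delta)$ when $m\geq5$, and the small cases $m=2,3,4$ need separate checks. The approach is to study the normal subgroup $K=\langle g_1,\dots,g_d\rangle\cap B$.

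\begin{enumerate}
\item Choose generators $(c_1,x_1),\dots,(c_d,x_d)$ of $C_2\wr G$, with $c_r\in C_2^X$ and $x_r\in G$. Fix a point $\omega\in X$. Lift each $c_r$ to $b_r\in H^X$ coordinatewise: an even coordinate goes to the identity and an odd coordinate to a transposition. Modify the $\omega$-coordinates of $b_1$ and $b_2$ by even permutations $a_1,a_2\in A$, chosen so that $A$ is generated together with suitable data. Here we use that $A$ is $2$-generated, or trivial, or equal to $C_3$, or equal to $V_4\rtimes C_3$.

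\item Let $M$ be the subgroup generated by the elements $g_r=(b_r,x_r)$. Then $M$ maps onto $C_2\wr G$, so $MB=W$ and $M\cap B$ maps onto $C_2^X$ modulo $A^X$. It remains to show that $M\supseteq A^X$. Since $G$ is transitive, $A^X$ is the normal closure in $W$ of the $\omega$-coordinate copy of $A$, so it suffices to obtain the full $A$ in the $\omega$-coordinate inside $M\cap B$.

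\item Take the stabiliser $G_\omega$ and consider the standard map from $M_\omega$ (the preimage of $G_\omega$) to $H$ given by the $\omega$-coordinate, in the sense of Schreier generators and the section map. Because $G$ is transitive and $d\geq2$, we can arrange, by choosing the $a_r$ freely, that the image of this map contains $A$. Then, using that the $\omega$-coordinate projection of $M\cap B$ is normal in the image, we get that this projection contains $[A,A]$ or $A$ itself.
\end{enumerate}

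The main obstacle is step 3. The freedom to choose the $a_r$ only affects words in a controlled way, and for non-perfect $A$ (the cases $m=3,4$) a quotient argument is needed: the elements $a_r$ could all land in a quotient that the other generators already kill. For these cases we would pass to the quotients $H\to S_3$ and $H\to C_2$, and use the counting fact that the number of $G$-module generators of the relevant modules over $C_2\wr G$ is bounded by $d$. We would treat $m\leq4$ by explicit module arguments and $m\geq5$ by the perfectness of $A$ combined with a Gaschütz-type lifting of generating tuples along $A^X$. The key point is that $A^X$ is a direct product of copies of a nonabelian simple group permuted transitively, which needs at most $2\leq d$ extra generators beyond a generating set of the quotient.
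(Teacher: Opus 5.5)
Your lower bound is correct. The upper bound, which is the whole content of the lemma, is not proved.

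\textbf{Step 3 makes two unsupported moves.}
\begin{enumerate}
\item It asserts without argument that the $a_r$ can be chosen so that the $\omega$-coordinate image of $M_\omega$ contains $A$.
\item Even granting that, the conclusion you draw (the $\omega$-coordinate \emph{projection} of $M\cap B$ contains $A$) is not what step 2 needs. Step 2 needs the coordinate subgroup $A_\omega=\{b\in A^X : b_x=1 \text{ for } x\neq\omega\}$ to lie in $M$.
\end{enumerate}

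For $m\geq5$, using that $A$ is perfect and $G$ is transitive, the most you can deduce is that $M\cap A^X$ is a subdirect product of $A^X$. That means a direct product of full diagonal subgroups over the blocks of a $G$-invariant partition of $X$. Compare $\{(h,h):h\in A_5\}\leq A_5\times A_5$, which projects onto both factors but contains neither coordinate copy. Showing that some choice of lifts avoids all such diagonal configurations is exactly the hard part, and your sketch does not touch it.

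\textbf{The remaining appeals are circular or only announced.}
\begin{enumerate}
\item The ``Gasch\"utz-type lifting'' is circular. Gasch\"utz's lemma lifts a generating $d$-tuple of $W/A^X$ to one of $W$ only when you already know $d(W)\leq d$.
\item The sentence about $A^X$ needing ``at most $2\leq d$ extra generators'' read literally gives only $d(W)\leq d+2$. Read charitably, it restates the bound $d(W)\leq\max(2,d(W/A^X))$ that has to be proved.
\item For $m=3,4$, $A$ is soluble and the new chief factors are abelian. There, whether a generating tuple of the quotient lifts depends on the module structure and must be checked; ``explicit module arguments'' are only announced.
\end{enumerate}

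\textbf{What the paper uses instead.} It relies on two external results rather than proving this.
\begin{enumerate}
\item For $m\geq5$: $A_m^X$ is the unique minimal normal subgroup of $W$ (using transitivity of $G$ on $X$), it is non-abelian, and $W/A_m^X\cong C_2\wr_X G$. The theorem of Lucchini and Menegazzo says a finite group $L$ with a unique, non-abelian minimal normal subgroup $N$ satisfies $d(L)=\max(2,d(L/N))$. This gives the result at once.
\item For $m\leq4$: Lucchini's formula $d(H\wr G)=\max\bigl(d(H/H'\wr G),\lfloor (d(H)-2)/|X|\rfloor+2\bigr)$, with $H/H'\cong C_2$ and $d(H)\leq2$.
\end{enumerate}
Either cite these, or supply the counting arguments behind them. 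As written, your step 3 states what is needed rather than proving it.
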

\begin{prf}
If $H=S_n$ with $n\geq5$, then $A_n^X$ is the unique minimal normal subgroup of $H\wr G$. Using Theorem 1.1 in \cite{LuMe} yields that $d(H\wr G)=\max(2, d(C_2\wr G))$. If $H=S_2$, $S_3$, or $S_4$, then it follows from Theorem 2 in \cite{Lu97} that
\[
d(H\wr G)=\max\left(d(C_2\wr G),\left\lfloor\frac{d(H)-2}{n}\right\rfloor+2\right)=\max(2, d(C_2\wr G)).
\]
\end{prf}

We will now then prove that $d(F)=\order{I}$ when $\order{I}=2,3$. In the rest of this section, we will denote $S_l=\Sym(\Delta_i)$, $S_m=\Sym(\Delta_j)$ and $S_n= \Sym(\Delta_k)$. When $I$ falls into case~\ref{i:chain}, using Lemma~\ref{lem:chain} repeatedly, we deduce that $F$ is isomorphic to iterated wreath products of symmetric groups. Corollary B in \cite{LuQuick} gives that $d(F)=\order{I}$. In case~\ref{i:antichain} where $I$ is an antichain, $F$ is a direct product of symmetric groups by Lemma~\ref{lem:antichain}. This direct product has $\order{I}$ factors, and so $d(F)=\order{I}$. In the case~\ref{i:triangle}, by Lemma~\ref{lem:Generalisediswreath},
\[
F\cong \Sym(\Delta_i)\wr_{\Delta_j\times\Delta_k}(\Sym(\Delta_j)\times \Sym(\Delta_k)).
\]
Then Lemma~\ref{lem:symwreath} shows that $d(F)=d(C_2\wr_{\Delta_j\times\Delta_k}(\Sym(\Delta_j)\times \Sym(\Delta_k)))=3$. The following lemma will show that $d(F)=3$ when $I$ falls into case~\ref{i:pyramid}.

\begin{lem}
If $I$ belongs to case~\ref{i:pyramid}, then $d(F)=\order{I}$.
\end{lem}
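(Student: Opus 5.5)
The plan is to peel off one minimal element with Lemma~\ref{lem:Generalisediswreath}, which reduces the problem to Lemma~\ref{lem:symwreath} together with the chain case already settled. In case~\ref{i:pyramid} we have $i<k$, $j<k$ and $i\perp j$, so $i$ is minimal and $A(i)=\{k\}$. Put $J=I\backslash\{i\}=\{j,k\}$; this is ancestral and is a chain $j<k$. Lemma~\ref{lem:Generalisediswreath} then gives
\[
F\cong \Sym(\Delta_i)\wr_{\Delta_k} F_J,
\]
where $F_J$ acts on $\Delta_{A(i)}=\Delta_k$ via $\varphi^J_{k}$. By Lemma~\ref{lem:chain}, $F_J\cong S_m\wr S_n$ in its imprimitive action. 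Its induced action on $\Delta_k$ is the action through the top group $S_n$, which is transitive. The chain case (Corollary~B of \cite{LuQuick}) gives $d(F_J)=2$. So Lemma~\ref{lem:symwreath} applies with $H=S_l$, $X=\Delta_k$ and $G=F_J$. This action is transitive but not faithful; I will use the lemma as stated, which only assumes transitivity. It yields
\[
d(F)=\max\bigl(2,\ d(C_2\wr_{\Delta_k}F_J)\bigr).
\]

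It remains to show $d(C_2\wr_{\Delta_k}F_J)=3$.

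\emph{Lower bound.} The abelianisation of $C_2\wr_X G$ with $G$ transitive on $X$ is $C_2\times G^{ab}$, because modulo commutators all base coordinates are identified. Here $G^{ab}=(S_m\wr S_n)^{ab}\cong C_2\times C_2$, since $S_m$ and $S_n$ are non-trivial symmetric groups. Hence the abelianisation is $C_2^3$, which needs $3$ generators.

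\emph{Upper bound.} Let $F_J=\langle a,b\rangle$, and let $c$ be the base element that is the non-identity of $C_2$ in the $\varepsilon$-coordinate and trivial elsewhere. By transitivity of $F_J$ on $\Delta_k$, the conjugates of $c$ under $\langle a,b\rangle$ give all coordinate copies of $C_2$, and these generate the base group $C_2^{\Delta_k}$. Therefore $\langle a,b,c\rangle$ is the whole group, so $d=3$ and $d(F)=3=\order{I}$. This is exactly the mechanism of Lemma~\ref{lem:Digeneration}.

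The only delicate step is applying Lemma~\ref{lem:symwreath} when $F_J$ does not act faithfully on $X=\Delta_k$. If that turns out to be a problem, I would bypass it and argue directly on $F$. The lower bound still holds: $F^{ab}$ surjects onto $C_2^3$ via the sign maps, namely the product of signs over the base coordinates of $S_l$, the same for $S_m$, and the sign on $S_n$. For the upper bound, start from Lemma~\ref{lem:GeneralisedD}, which gives $F=\langle D_i,D_j,D_k\rangle$. I would then choose one generator $x$ combining a transposition in $D_i$ with an $l$-cycle-type element, and similarly handle $D_j$ and $D_k$, aiming for three elements that generate. Verifying such a choice is where I expect the main work, if the lemma cannot be used directly.
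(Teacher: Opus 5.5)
There is a genuine gap, and it is exactly the step you flagged. Your upper bound $d(F)\le 3$ rests entirely on Lemma~\ref{lem:symwreath}, applied to $G=F_J\cong S_m\wr S_n$ acting on $X=\Delta_k$ through its top group. That action has kernel the base group $S_m^{\Delta_k}$. The lemma is only usable for faithful actions. The paper checks faithfulness every time it invokes it (case~\ref{i:triangle}, and via Lemma~\ref{lem:Generalisediswreath}\ref{i:unique} in Lemma~\ref{lem:generalisedupper}), and it deliberately avoids the lemma in the pyramid case. Its proof genuinely breaks here. For $l\ge5$ it needs $A_l^X$ to be the unique minimal normal subgroup of $S_l\wr_X G$, so that Lucchini--Menegazzo applies. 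But the kernel $S_m^{\Delta_k}$, lying in the top group, commutes with the base $S_l^{\Delta_k}$, is normal in $F$, and meets the base trivially (indeed $F\cong(S_l\times S_m)\wr_{\Delta_k}S_n$). So $F$ has a second minimal normal subgroup. For $l\le4$, the cited theorem of Lucchini is formulated for transitive groups of degree $n$, not for non-faithful actions. Hence the equality $d(F)=\max\bigl(2,d(C_2\wr_{\Delta_k}F_J)\bigr)$ is unproved. Your correct computations (the abelianisation and $d(C_2\wr_{\Delta_k}F_J)=3$) only give $d(F)\ge 3$.

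Your fallback stops exactly where the work lies. The missing idea, which is what the paper does, is to work in $W=(S_l\times S_m)\wr_{\Delta_k}S_n\cong F$ with three explicit generators. Take generators $a=(a_1,\dots,a_n)(1\ 2)$ and $b=(b_1,\dots,b_n)\beta$ of $S_m\wr S_n$ (from Corollary~B of \cite{LuQuick} and Gasch\"utz's Lemma), and choose $\alpha\in A_l$ with $\langle(1\ 2),\alpha\rangle=S_l$. Let $x$ be $(1\ 2)\in S_l$ in the first coordinate, let $y$ be $a$ with $\alpha$ placed in the $S_l$-component of the first coordinate, and let $z=b$. Then $y^2$ is a base element whose $S_l$-components are $(\alpha,\alpha,1,\dots,1)$. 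Since $x$ is trivial outside the first $S_l$-component, $\langle x,y^2\rangle$ projects onto abelian groups in every other component, so $\langle x,y^2\rangle'$ is $A_l$ in that single component. Because $\alpha$ is even, this contains $\bar\alpha$ (that is, $\alpha$ in the first $S_l$-component and trivial elsewhere). Therefore $\bar\alpha^{-1}y\in\langle x,y,z\rangle$. This element and $z$ generate $S_m\wr S_n$, while $x$ and $\bar\alpha$ generate one coordinate copy of $S_l$, whose conjugates under the transitive $S_n$ give the rest of the base. Without this argument, or a proof that Lemma~\ref{lem:symwreath} extends to non-faithful transitive actions, the upper bound is not established.
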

\begin{prf}
Lemma~\ref{lem:Generalisediswreath} tells us that $F\cong S_l\wr_{\Delta_k}(S_m\wr_{\Delta_k} S_n)$. As the base group $S_m^{\Delta_k}$ of the wreath product $S_m\wr_{\Delta_k} S_n$ commutes with $S_l^{\Delta_k}$, $F\cong (S_l\times S_m)\wr_{\Delta_k} S_n$. Set $W=(S_l\times S_m)\wr_{\Delta_k} S_n$. Observe that $C_2^3$ is an image of $W$, and so $d(F)\geq3$.

We will then give a generating set of $W$. We express the base group $(S_l\times S_m)^n$ of the wreath product $W$ as $(S_l\times S_m)\times (S_l\times S_m)\times \dots \times (S_l\times S_m)$, and we write $((g_1,h_1),\dots, (g_n,h_n))f$ for elements in $W$, where $g_1,\dots, g_n$ are elements of $S_l$, $h_1,\dots, h_n$ are elements of $S_m$ and $f$ is an element of $S_n$. Corollary B in \cite{LuQuick} tells us that $d(S_m\wr S_n)=2$. Using Gasch{\"u}tz's Lemma, we have the following generators of $S_m\wr S_n$:
\begin{align*}
a=(a_1,\dots, a_n)(1\ 2) && b=(b_1,\dots, b_n)\beta,
\end{align*}
where each $a_i$ and $b_i$ is an element of $S_m$ and $\beta\in S_n$ generates $S_n$ with $(1\ 2)$. There is an element $\alpha\in S_l$ such that $\langle(1\ 2), \alpha\rangle=S_l$ and $\alpha\in A_l$. Let
\begin{align*}
&x=(((1\ 2), 1), (1,1),(1,1),\dots, (1,1))\\
&y=((\alpha, a_1), (1,a_2),(1,a_3),\dots, (1,a_n))(1\ 2)\\
&z=((1, b_1), (1,b_2),(1,b_3),\dots, (1,b_n))\beta
\end{align*}
to be elements of $W$ and $G=\langle x, y, z \rangle$. Also set 
\begin{align*}
\bar\alpha&=((\alpha, 1), (1,1),(1,1),\dots, (1,1))\\
\bar y&=((1, a_1), (1,a_2),(1,a_3),\dots, (1,a_n))(1\ 2)
\end{align*}
such that $y=\bar\alpha\bar y$. 

We wish to prove that $G=W$. To this end, it suffices to prove that $\bar\alpha, \bar y\in G$. This is because of the three following facts. The elements $\bar\alpha$ and $x$ generate one copy of $S_l$ of the base group of $W$, the wreath product $S_m\wr S_n$ can be generated by $\bar y$ and $z$, and the symmetric group $S_n$ permutes entries of the base group of $W$ transitively. 

Observe that 
\[
y^2=((\alpha, a_1a_2), (\alpha, a_2a_1),(1,a_3^2),(1,a_4^2),\dots, (1,a_n^2)).
\]
Consider the subgroup $H=\langle x, y^2\rangle$. Since the identity is at all entries of $x$ apart from the first entry, $H'=A_l\times \trivsubgp\times\trivsubgp\dots\times \trivsubgp$. We deduce that $\bar \alpha\in H'\leq G$, and so $\bar y\in G$. Hence $G=W$ and $d(F)=d(W)=3$.
\end{prf}

When $I$ falls into case~\ref{i:wrdi}, Lemma~\ref{lem:chain} and \ref{lem:antichain} give that $F\cong S_l\times (S_m\wr S_n)$. Then Theorem 2.5 in \cite{semigroup} shows that $d(F)=3$.

Combining all the above, we deduce the following lemma.
\begin{lem}\label{lem:I=2,3}
If $\order{I}=2$ or $3$, then $d(F)=\order{I}$.
\end{lem}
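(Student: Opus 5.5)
By Lemma~\ref{lem:I}, every poset with two or three elements is, up to isomorphism, one of the five types (i)--(v). So the proof is a case check against that list, citing the computation already done above for each case. I would not redo any calculations, only make sure each case has been covered with the right bound.

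For $\order{I}=2$ only types (i) and (ii) can occur. In type (i), Lemma~\ref{lem:chain} gives $F\cong \Sym(\Delta_i)\wr\Sym(\Delta_j)$ (or the reverse order), and Corollary~B of \cite{LuQuick} gives $d(F)=2$. In type (ii), Lemma~\ref{lem:antichain} gives $F\cong \Sym(\Delta_i)\times\Sym(\Delta_j)$. This group maps onto $C_2^2$, and two generators suffice by the direct-product computation cited above, so again $d(F)=2$.

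For $\order{I}=3$ each type needs its own argument.
\begin{itemize}
\item Chains use \cite{LuQuick} together with Lemma~\ref{lem:chain}.
\item Antichains use Lemma~\ref{lem:antichain}: the lower bound comes from the quotient $C_2^3$, and the upper bound from the known fact that a product of three nontrivial symmetric groups is $3$-generated.
\item Type (iii) uses Lemma~\ref{lem:Generalisediswreath} to write $F\cong S_l\wr_{\Delta_j\times\Delta_k}(S_m\times S_n)$. Lemma~\ref{lem:symwreath} then applies, since $d(S_m\times S_n)=2$ and $S_m\times S_n$ acts transitively on $\Delta_j\times\Delta_k$. This reduces the problem to $d(C_2\wr(S_m\times S_n))=3$.
\item Type (iv) is the preceding lemma.
\item Type (v) is $S_l\times(S_m\wr S_n)$, handled by \cite{semigroup}.
\end{itemize}

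The only step needing a real check is the reduction in type (iii). There I must verify that $d(C_2\wr_{\Delta_j\times\Delta_k}(S_m\times S_n))=3$. The lower bound is easy, because $C_2^3$ is a quotient. For the upper bound, the module $C_2^{\Delta_j\times\Delta_k}$ has trivial-module multiplicity one in its head, so one extra generator beyond $d(S_m\times S_n)=2$ suffices. Combining all cases yields $d(F)=\order{I}$.
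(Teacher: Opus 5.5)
Your proposal is correct and follows the paper's proof: the same case split from Lemma~\ref{lem:I}, with the same reduction and the same cited result in each case (iterated wreath products for chains, direct products for antichains, Lemma~\ref{lem:symwreath} for case~\ref{i:triangle}, the preceding lemma for case~\ref{i:pyramid}, and the semigroup paper for case~\ref{i:wrdi}). Your one addition is the bound $d(C_2\wr_{\Delta_j\times\Delta_k}(S_m\times S_n))\leq 3$, which the paper leaves implicit; justify it by noting that the base group is generated as a module by a single point vector, since $S_m\times S_n$ is transitive on $\Delta_j\times\Delta_k$, giving $d\leq 1+2$ (having the trivial module with multiplicity one in the head does not by itself make the module cyclic).
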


In the rest of this section, we will prove Theorem~\ref{thm:chap4main1}. We first determine the lower bound of $d(F)$.

\begin{lem}\label{lem:FJimage}
The direct product $C_2^I$ is a homomorphic image of $F$.
\end{lem}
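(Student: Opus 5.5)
We need a surjective homomorphism $F\to C_2^I$. We build it by induction on $\order{I}$, using the decomposition of Corollary~\ref{cor:FJsemi} together with the sign map.

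If $\order{I}=1$, then $F=\Sym(\Delta_i)$. Since $\Delta_i$ is not a singleton, the sign map gives $F\to C_2$ onto.

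For the inductive step, let $i$ be a minimal element of $I$ and set $J=I\backslash\{i\}$. By Lemma~\ref{lem:Generalisediswreath}, $F\cong G_i\wr_{\Delta_{A(i)}} F_J$ with $G_i=\Sym(\Delta_i)$. By the inductive hypothesis there is a surjection $\psi_J:F_J\to C_2^J$. Since $F\varphi_J=F_J$, the composite $\varphi_J\psi_J$ is a surjection $F\to C_2^J$.

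Next define $\sigma:F\to C_2$ on the wreath product by
\[
(g_\delta)_{\delta\in\Delta_{A(i)}}\,f\;\longmapsto\;\prod_{\delta}\operatorname{sgn}(g_\delta).
\]
This is a homomorphism, because conjugation by $f\in F_J$ only permutes the coordinates of the base group, and the product of signs is invariant under permuting coordinates. It is the standard map $G\wr_X K\to (G/G')\to$ via the product over $X$; here it is the composite of $\operatorname{sgn}$ on each coordinate with multiplication in the abelian group $C_2$. Explicitly,
\[
\sigma(b_1f_1\,b_2f_2)=\sigma(b_1\,b_2^{f_1^{-1}})=\sigma(b_1)\sigma(b_2).
\]
The map $\sigma$ is onto: take the element of $D_i$ whose $\varepsilon_i$-coordinate is a transposition.

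Combine the two maps into $\Phi=(\sigma,\varphi_J\psi_J):F\to C_2\times C_2^J=C_2^I$. To see that $\Phi$ is surjective, note that $\sigma$ vanishes on $\bar F_J$ while $\varphi_J\psi_J$ maps $\bar F_J$ onto $C_2^J$ by Lemma~\ref{lem:barFKiso}. Also $\varphi_J$ kills $H_i\supseteq D_i$, while $\sigma$ maps $D_i$ onto $C_2$. Hence the image contains $C_2\times 1$ and $1\times C_2^J$, so it is all of $C_2^I$.

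The only point requiring care is checking that $\sigma$ is a well-defined homomorphism when transported through the isomorphisms $\theta_i$ and $\varphi_J$. This follows from Lemma~\ref{lem:thetaandf}, since $(h^f)\theta_i$ is obtained from $h\theta_i$ by permuting coordinates.
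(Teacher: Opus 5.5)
Your proof is correct and follows essentially the same route as the paper. Like the paper, you remove a minimal element $i$, use $F\cong \Sym(\Delta_i)\wr_{\Delta_{A(i)}} F_J$ to obtain $C_2\times F_J$ as a quotient, and apply induction to $F_J$; you additionally spell out the sign-product map $\sigma$ and the surjectivity check the paper leaves implicit, and your base case $\order{I}=1$ is, if anything, cleaner than the paper's $\order{I}=2$ base case, which only cites the chain lemma.
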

\begin{prf}
We will use induction on the cardinality of $I$. Let $\order{I}=2$ and set $I=\{i, j\}$. Without loss of generality, let $i$ be a minimal element. It follows from Lemma~\ref{lem:chain} that $C^2_2$ is a homomorphic image of $F$. This establishes the base case.

Now let $n\geq2$ be a fixed but arbitrary integer. Suppose that the lemma holds whenever $\order{I}=n$. We will prove that the lemma holds when $\order{I}=n+1$. Let $j$ be a minimal element in $I$ and set $J=I\backslash\{j\}$. According to Lemma~\ref{lem:Generalisediswreath}, $F\cong \Sym(\Delta_k)\wr_{\Delta_{A(k)}} F_J$.
Hence $C_2\times F_J$ is an image of $F$. Using the induction on $F_J$, we deduce that $C_2^{I}$ is an image of $F$.
\end{prf} 

Now we turn to the upper bound of $d(F)$.

\begin{lem}\label{lem:generalisedupper}
Let the partially ordered set $I$ and the generalised wreath product $F$ be as defined at the beginning of this section. Then $d(F)\leq\order{I}$.
\end{lem}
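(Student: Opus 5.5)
Proof proposal. We argue by induction on $\order{I}$. The base cases are $\order{I}=2,3$, which are settled by Lemma~\ref{lem:I=2,3}. For $\order{I}=1$, $d(F)\leq 2$ is the classical two-generation of symmetric groups; this case is excluded from the statement and is not needed.

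For the inductive step, suppose $\order{I}=n+1\geq 4$ and that the lemma holds for all posets of size at most $n$ with at least two elements. Choose a minimal element $j$ of $I$ and put $J=I\backslash\{j\}$, so $\order{J}=n\geq 3$. By Lemma~\ref{lem:Generalisediswreath},
\[
F\cong \Sym(\Delta_j)\wr_{\Delta_{A(j)}} F_J .
\]
The case split is according to whether $A(j)$ is empty.

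\emph{Case $A(j)=\emptyset$.} Then $j$ is incomparable to every other element, and by Lemma~\ref{lem:Generalisediswreath}\ref{i:maximal} we have $F\cong \Sym(\Delta_j)\times F_J$. The naive bound $d(\Sym(\Delta_j))+d(F_J)\leq n+2$ is too weak, so we do not use it. Instead we lower the count by one explicitly. By induction $F_J$ has generators $g_1,\dots,g_n$. Write $\Sym(\Delta_j)=\langle \tau,\alpha\rangle$ with $\tau$ a transposition and $\alpha$ even; such $\alpha$ exists, except that for $\order{\Delta_j}=2$ we take $\alpha=1$. Generate $F$ by
\[
(\tau,1),\quad (\alpha,g_1),\quad (1,g_2),\quad\dots,\quad (1,g_n).
\]
This is the same device as in the pyramid case. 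Let $G$ be the subgroup these elements generate. Then $G$ contains the commutators of $(\tau,1)$ with conjugates of $(\alpha,g_1)$, which lie in $\Sym(\Delta_j)\times 1$. Since $\langle \tau^{\langle\tau,\alpha\rangle}\rangle=\Sym(\Delta_j)$, the intersection $G\cap(\Sym(\Delta_j)\times 1)$ contains the normal closure of $\tau$ in $\Sym(\Delta_j)$, and this is all of $\Sym(\Delta_j)$. Hence $(\alpha,1)\in G$, so $(1,g_1)\in G$, and therefore $G=F$.

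\emph{Case $A(j)\neq\emptyset$.} Here $F_J$ acts on $\Delta_{A(j)}$ through $F_{A(j)}$, which is transitive by Lemma~\ref{lem:transitive}. Apply Lemma~\ref{lem:symwreath} with $H=\Sym(\Delta_j)$ and $G=F_J$ acting on $X=\Delta_{A(j)}$. Its hypothesis $d(F_J)\geq 2$ holds by Lemma~\ref{lem:FJimage}. This gives
\[
d(F)=\max\bigl(2,\ d(C_2\wr_{\Delta_{A(j)}} F_J)\bigr),
\]
so it suffices to show $d(C_2\wr_{X} F_J)\leq n+1$. The base group $C_2^{X}$ is the permutation module of $F_J$ on $X$, and $X$ is transitive, so it is cyclic as a module, generated by the indicator of one point $\varepsilon_j$. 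Thus $C_2\wr F_J$ is generated by $F_J$ together with that single base element. This gives $d\leq n+1$ as soon as $F_J$ is generated by $n$ elements, which is the inductive hypothesis. Concretely, the generators are $D_j$ (a copy of $C_2$) together with $n$ generators of $F_J$, in the spirit of Lemma~\ref{lem:GeneralisedD}. Alternatively one can avoid Lemma~\ref{lem:symwreath} and argue directly: use Lemma~\ref{lem:Digeneration} to see that $H_j$ is the normal closure of $D_j$ under $\bar F_J$. Then generate $D_j\cong\Sym(\Delta_j)$ by a transposition $\tau$, and absorb the second generator $\alpha$ of $D_j$ into a generator of $\bar F_J$ using the commutator trick from the first case.

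\emph{Main obstacle.} The delicate step is absorbing the second generator of $\Sym(\Delta_j)$ into a generator of the rest of the group without losing anything. In the direct-product case this works because the commutator argument isolates the $\Sym(\Delta_j)$-coordinate. In the wreath case I would rely on Lemma~\ref{lem:symwreath}, which reduces everything to the elementary cyclic-module argument. The one check needed there is that the action of $F_J$ on $X$ is transitive, even though it need not be faithful; Lemma~\ref{lem:symwreath} only asks for transitivity, so this suffices.
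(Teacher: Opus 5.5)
Your case $A(j)=\emptyset$ is correct. The gap is in the case $A(j)\neq\emptyset$.

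As soon as $I$ has a second minimal element $j'$, we have $j'\notin A(j)$. So the kernel $N^J_{A(j)}$ of $F_J$ acting on $X=\Delta_{A(j)}$ contains $H^J_{j'}\neq\trivsubgp$, and the action is transitive but not faithful. Choosing $j$ cleverly does not avoid this: take $I=\{a,b,c,d\}$ with only $a<c$ and $b<d$.

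Lemma~\ref{lem:symwreath} concerns a transitive permutation group, and its proof genuinely uses faithfulness. For $\order{\Delta_j}\geq 5$ it needs $\mathrm{Alt}(\Delta_j)^X$ to be the unique minimal normal subgroup of $\Sym(\Delta_j)\wr_X F_J$. Here, however, the kernel of $F_J$ on $X$ centralises the base group, is normal in the wreath product, and meets the base trivially. So uniqueness fails and the quoted Lucchini--Menegazzo theorem does not apply. The paper only ever invokes the lemma for a faithful action. Your closing remark that the lemma ``only asks for transitivity'' is therefore exactly the unjustified step. The formula may still be true, for instance by reducing to the faithful quotient via crown-based powers, but that needs an argument you do not give.

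This is why the paper splits differently. If there is a unique minimal element $m$, then $A(m)=K$ and the action is faithful by Lemma~\ref{lem:Generalisediswreath}\ref{i:unique}. If there are two minimal elements $m\neq n$, it avoids wreath products altogether. It uses $F=\langle D_i:i\in I\rangle$, the fact that $\langle D_m,D_n\rangle\cong\Sym(\Delta_m)\times\Sym(\Delta_n)$ is two-generated, and $d(\bar F_K)\leq\order{K}$.

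Your ``alternative'' route can repair this uniformly, but not as written. Let $\hat\tau,\hat\alpha\in D_j$ correspond to $\tau,\alpha$ and put $y=\hat\alpha g_1$. Conjugating by $y$ moves the $\varepsilon_j$-coordinate to $\varepsilon_j(g_1\varphi_{A(j)})$, so the direct-product commutator trick does not carry over verbatim.

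The missing step is as follows. Let $r$ be the length of the orbit of $\varepsilon_j$ under $g_1$ on $\Delta_{A(j)}$. Then $y^r=b\,g_1^r$, where $g_1^r$ fixes $\varepsilon_j$ and $b\theta_j$ has entry $\alpha$ at each point of that orbit and $1$ elsewhere. Hence $\hat\tau^{\,y^r}$ is the element of $D_j$ corresponding to $\tau^\alpha$. So $G\cap D_j$ contains the elements corresponding to all $\tau^{\alpha^k}$. These generate a subgroup of $\Sym(\Delta_j)$ that is normalised by $\langle\tau,\alpha\rangle$ and contains a transposition, so $D_j\leq G$. Then $g_1=\hat\alpha^{-1}y\in G$, so $\bar F_J\leq G$, and Lemma~\ref{lem:Digeneration} gives $H_j\leq G$, that is, $G=F$.

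With this written out you would have a proof genuinely different from the paper's. It is a single uniform inductive step, with your case $A(j)=\emptyset$ being $r=1$. It needs no case split on the number of minimal elements and no appeal to Lemma~\ref{lem:symwreath} in the inductive step.
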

\begin{prf}
We will use induction on the cardinality of $I$ to prove the lemma. Lemma~\ref{lem:I=2,3} tells us that the lemma holds when $\order{I}$ is $2$ or $3$. Let $n$ be a fixed but arbitrary integer with $n\geq3$. Suppose that the theorem holds whenever $2\leq \order{I}\leq n$. We will prove that the lemma holds when $\order{I}=n+1$. 

Suppose that $I$ has a unique minimal element $m$. Then set $K=I\backslash\{m\}$, and $K$ is an ancestral subset. It follows from Lemma~\ref{lem:Generalisediswreath}\ref{i:unique} that $F\cong \Sym(\Delta_m)\wr_{\Delta_{A(m)}} F_K$, and the action of $F_K$ on $\Delta_{A(m)}=\Delta_K$ is faithful. Lemma~\ref{lem:transitive} tells us that $F_K$ is transitive on $\Delta_K$. Then Lemma~\ref{lem:symwreath} shows that $d(F)=\max(2, d(C_2\wr F_K))$. By the inductive hypothesis, we have
\[
d(C_2\wr F_K)\leq d(C_2)+d(F_K) =\order{K}+1 =\order{I}.
\]
Then $d(F)\leq\max(2, \order{I})=\order{I}$.

Suppose that $m, n$ are two different minimal elements in $I$. Then set $K=I\backslash\{m,n\}$. We observe that $K$ is ancestral and $F_K$ is the generalised wreath product constructed from $(\Sym(\Delta_i),\Delta_i)_{i\in K}$. Lemma~\ref{lem:GeneralisedD} shows that $F=\langle D_i:i\in I\rangle$ and $F_K=\langle D_i^K:i\in K\rangle$. It follows from
Lemma~\ref{lem:barFKiso} that $\bar F_K=\langle D_i:i\in K\rangle$. Therefore
\[
d(\langle D_i:i\in K\rangle)=d(\bar F_K)=d(F_K)=\order{K}.
\]
On the other hand, by Lemma~\ref{lem:Hinormal}\ref{i:HicrossHi} and \ref{lem:Hidirectproduct}, we obtain that $\langle H_m, H_n\rangle$ is isomorphic to $ \Sym(\Delta_m)^{\Delta_{A(m)}}\times\Sym(\Delta_n)^{\Delta_{A(n)}}$. Hence, for $D_m$ and $D_n$ as defined in Equation~\eqref{eq:DiJdefinition}, $\langle D_m, D_n \rangle\cong \Sym(\Delta_m)\times \Sym(\Delta_n) $. Then $d(\langle D_m, D_n \rangle)=2$. Therefore,
\[
d(F)=d(\langle D_i:i\in I\rangle)\leq d(\langle D_m, D_n \rangle)+d(\langle D_i:i\in K\rangle) = 2+\order{K}=\order{I}.
\]
The induction is completed and we have proved the lemma.
\end{prf}

{\small


\begin{thebibliography}{99} \setlength{\itemsep}{-0.2ex}

\bibitem{Ana}
M. Anagnostopoulou-Merkouri, R.~A. Bailey, P.~J. Cameron. Permutation groups, partition lattices and block structures. \textit{Forum Math. Sigma}, {\bf 13} (2025), e180, pp. 1–32.

\bibitem{semigroup}
J. Ara\'ujo, W. Bentz, J. Mitchell, C. Schneider. The rank of the semigroup of transformations stabilising a partition of a finite set. \textit{Math. Proc. Cambridge Philos. Soc.}, {\bf 159} (2015), no.~2, pp. 339–353.

\bibitem{Generalised}
R. A. Bailey, C. E. Prager, C. A. Rowley, T. P. Speed. Generalized wreath products of permutation groups. \textit{Proc. Lond. Math. Soc.(3)}, {\bf 1} (1983), pp. 69–82.

\bibitem{Bond}
  Ievgen V. Bondarenko, ``Finite generation of iterated wreath
  products,'' \textit{Arch.\ Math.}~\textbf{95} (2010), pp. 301–308.

\bibitem{LuDallaVolta2}
F. Dalla~Volta, A. Lucchini. Finite groups that need more generators
than any proper quotient. \textit{J. Austral. Math. Soc.}, Series A, {\bf 64} (1998), pp. 82–91.


\bibitem{LuQuick}  
  J. Lu, M. Quick. Generation of iterated wreath products constructed from alternating, symmetric and cyclic groups. \textit{Internat. J. Algebra Comput}, {\bf 35} (2025), no.~5, pp. 713–731.
  
  \bibitem{Lu97}
A. Lucchini. Generating wreath products and their augmentation ideals. \textit{Rend. Sem. Mat. Univ. Padova}, {\bf 98} (1997), pp. 67–87.

\bibitem{LuMe}
A. Lucchini, F. Menegazzo. Generators for finite groups with a unique minimal normal subgroup. \textit{Rend. Sem. Mat. Univ. Padova}, {\bf98} (1997), pp. 173–191.

\bibitem{LuTha}
A. Lucchini, D. Thakkar. The minimum generating set problem. \textit{J. Algebra}, {\bf 640} (2024), pp. 117–128.

\bibitem{MRQ2}
  Martyn Quick, ``Probabilistic generation of wreath products of
  non-abelian finite simple groups,~II,''
  \textit{Internat.\ J. Algebra Comput.}~\textbf{16} (2006), no.~3, pp. 493–503.


\bibitem{Wiegold}
  James Wiegold, ``Growth sequences of finite groups,''
  \textit{J. Aus.\ Math.\ Soc.}~\textbf{17} (1974), pp. 133–141.





\end{thebibliography}
\end{document}